\newtheorem{theorem}{Theorem}[section]
\newtheorem{lemma}[theorem]{Lemma}
\newtheorem{proposition}[theorem]{Proposition}
\theoremstyle{definition}
\newtheorem*{ack}{Acknowledgements}
\newtheorem{remark}[theorem]{Remark}
\numberwithin{equation}{section} \numberwithin{figure}{section}
\DeclareMathOperator{\Spec}{Spec}
\DeclareMathOperator{\an}{an}
\DeclareMathOperator{\Hom}{Hom}
\newcommand\CC{\mathbb{C}}
\definecolor{orange}{rgb}{1,0.5,0}
\title[Finiteness of non-constant maps  over a number field]{Finiteness of non-constant maps   over a number field}
\author{Ariyan Javanpeykar}
\address{Ariyan Javanpeykar \\
Institut f\"{u}r Mathematik\\
Johannes Gutenberg-Universit\"{a}t Mainz\\
Staudingerweg 9, 55099 Mainz\\
Germany.}
\email{peykar@uni-mainz.de}
\subjclass[2010]
{14G99 
(11G35,  
14G05,  
32Q45)} 
\keywords{hyperbolicity, function fields, rational points,   Lang-Vojta conjecture}
\begin{document}

\maketitle

\thispagestyle{empty}
 
 \begin{abstract}  Motivated by the intermediate Lang conjectures on hyperbolicity and rational points, we prove new finiteness results for non-constant morphisms from a fixed variety to a fixed variety defined over a number field by applying Faltings's finiteness results to moduli spaces of maps.
\end{abstract}

 \section{Introduction}  
We prove   finiteness results  motivated by Lang's intermediate conjectures \cite[\S2]{EJR}; these conjectures generalize Lang's conjectures \cite{Lang2} and relate the notion of being of general type to the behaviour of  rational points. The main   idea of this note is to apply Faltings's arithmetic finiteness results (\emph{formerly} Mordell's conjecture and the Mordell-Lang conjecture) to moduli spaces of maps (i.e., Hom-schemes).

We work with the following conventions. If $K$ is a field, a variety over $K$ is a geometrically integral quasi-projective scheme over $K$. Moreover, a surface over $K$ is a two-dimensional variety and a curve over $K$ is a one-dimensional variety over $K$. If $V$ is a variety over $K$ and $L/K$ is a field extension, we let $V_L$ denote $V\times_{\Spec K} \Spec L$. Also, if $f:V\dashrightarrow X$ is a rational map, we let $f(V)$ denote the image of the locus of determinacy of $f$.
 
 Recall that a variety $X$ over $\mathbb{C}$ is \emph{Brody hyperbolic} if every holomorphic map $\mathbb{C}\to X^{\an}$ is constant, where $X^{\an}$ denotes the complex-analytic space associated to $X$.
 Our first result says, in particular, that a Brody hyperbolic projective surface admits only finitely many  non-constant maps defined over a fixed number field from a given variety.  
 
 \begin{theorem}\label{thm:brody}
Let $X$ be a   projective     surface   over a finitely generated field $K$ of characteristic zero such that there is an embedding $K\to \mathbb{C}$ with $X_{\mathbb{C}}$  Brody hyperbolic. Then, for every finitely generated extension $L/K$ and every variety $V$ over $L$, the set of non-constant rational maps $f:V\dashrightarrow X_L$ over $L$  is finite.
\end{theorem}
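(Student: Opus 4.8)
The plan is to recast finiteness of the set of maps as finiteness of rational points on moduli spaces of maps, and to deduce the latter from Faltings's theorems (the Mordell conjecture and the Mordell--Lang conjecture). First, since a rational map out of $V$ is the same as a rational map out of a smooth projective model of $V$ (available in characteristic zero), we may assume $V$ is smooth and projective over $L$. A Brody hyperbolic complex variety carries no rational curves, so $X_{\mathbb C}$, and hence $X_{\overline L}$ and $X_L$, carry none; therefore every rational map from the smooth $V$ to the proper $X_L$ extends to a morphism, and it suffices to show that the set of non-constant morphisms $V\to X_L$ is finite. We may also enlarge $L$ by a finite extension, and we recall that a Brody hyperbolic projective surface is of general type, by the classification of surfaces (no rational or elliptic curve can lie on it). Put $H:=\Hom(V,X_L)$, a scheme locally of finite type over $L$, with $H_{\mathrm{nc}}\subseteq H$ the locally closed subscheme of non-constant morphisms. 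As $X$ carries no rational curves, every connected component of $H$ is proper over $L$; and $(H_{\mathrm{nc}})_{\mathbb C}$ is Brody hyperbolic, for a non-constant holomorphic map $\mathbb C\to(H_{\mathrm{nc}})^{\an}$ would be a non-trivial family $(f_t)_t$ of non-constant morphisms $V\to X_{\mathbb C}$, and then $t\mapsto f_t(v)$ would be a non-constant holomorphic map $\mathbb C\to X_{\mathbb C}^{\an}$ for a suitable $v\in V$. Hence every connected component $Z$ of $H_{\mathrm{nc}}$ is a proper $L$-scheme with $Z_{\mathbb C}$ Brody hyperbolic; when $\dim Z\le 1$, the normalization of $Z_{\overline L}$ is a disjoint union of curves of genus $\ge 2$ and the remaining (singular) points are finite in number, so $Z(L)$ is finite by Faltings's theorem on the Mordell conjecture.

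It remains to bound $H_{\mathrm{nc}}(L)$ on the positive-dimensional components, and it is here that Faltings's theorems get applied to moduli of maps. A non-constant morphism $f\colon V\to X_L$ is either dominant --- and then, $X$ being a surface of general type, there are only finitely many such $f$ by the Kobayashi--Ochiai finiteness theorem for dominant maps to varieties of general type --- or $f(V)$ is a curve $C\subseteq X_L$. In the latter case, $V$ being normal, $f$ factors as $V\twoheadrightarrow\widetilde C\to X_L$ with $\widetilde C$ the normalization of $C$; this $\widetilde C$ has genus $\ge 2$, since its base change to $\mathbb C$ maps non-constantly to the Brody hyperbolic $X_{\mathbb C}$ and is therefore itself Brody hyperbolic. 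Only finitely many isomorphism classes of such $\widetilde C$ arise, each being dominated by a general curve section of $V$ (de Franchis--Severi), and for each of them there are only finitely many surjections $V\twoheadrightarrow\widetilde C$ (finiteness of fibrations onto a curve of genus $\ge 2$, together with de Franchis); so we are reduced to bounding, for a fixed smooth projective curve $\widetilde C$ of genus $\ge 2$ over $L$, the $L$-points of the scheme of morphisms $\widetilde C\to X_L$ that are birational onto their image --- a scheme which can be positive-dimensional, as $X=\widetilde C\times(\text{a curve of genus }\ge 2)$ shows. To handle it, compose with an Abel--Jacobi embedding $\widetilde C\hookrightarrow J:=\Jac(\widetilde C)$ and, after choosing base points, realize the relevant parameter spaces inside Hom-schemes from $\widetilde C$ (equivalently $J$) into abelian varieties attached to $X$ (its Albanese, and the Jacobians of the image curves); within a disjoint union of copies of such an abelian variety indexed by a finitely generated group $\Hom(J,-)$, the condition ``the map lands in $X$'' cuts out subvarieties that contain no translate of a positive-dimensional abelian subvariety --- for such a translate would, using that $\widetilde C$ generates $J$, force $X$ to contain an elliptic curve, against Brody hyperbolicity --- whence Faltings's theorem on the Mordell--Lang conjecture makes their $L$-points finite. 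The finiteness of the image curves over $L$ that occur follows along the same lines (de Franchis--Severi together with Faltings, using that $X$ is a hyperbolic surface of general type).

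The main obstacle is exactly this last analysis: organizing the positive-dimensional part of the Hom-scheme --- which arises from curves of genus $\ge 2$ moving inside the surface $X$ --- into finitely many pieces, each a subvariety of an abelian variety with no translated abelian subvariety, and checking that the various auxiliary Hom-schemes of maps between $V$, curves of genus $\ge 2$, abelian varieties, and $X$ that enter this organization are finite, or have finitely many $L$-points, for the reasons Faltings's theorems require. Granting this, the set of non-constant morphisms $V\to X_L$ is a finite union of $L$-point sets each of which is finite, hence finite.
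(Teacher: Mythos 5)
Your proposal follows the paper's broad strategy---apply Faltings's theorems to moduli spaces of maps---but differs in the reduction steps and, more importantly, has a genuine gap at the central technical point.

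On the reduction: the paper passes from arbitrary $V$ to the case $\dim V=1$ by cutting with ample divisors and inducting on dimension (Lemma \ref{lemma:cutting}), whereas you split morphisms $V\to X_L$ into dominant ones (finite by Kobayashi--Ochiai) and non-dominant ones, factoring the latter through the normalization $\widetilde{C}$ of the image curve, invoking de Franchis--Severi to leave only finitely many candidate $\widetilde{C}$ and surjections $V\twoheadrightarrow\widetilde{C}$. Both reductions are legitimate, and yours is arguably more direct; either way one is left with bounding morphisms from a fixed genus-$\geq 2$ curve $\widetilde{C}$ to $X_L$.

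The gap lies exactly where you flag the ``main obstacle'': bounding the $L$-points of the positive-dimensional part of $\underline{\Hom}(\widetilde{C},X_L)$. The paper's key input here is Theorem \ref{thm:abstract2}: by applying Kobayashi--Ochiai to the family of \emph{evaluation maps} $\mathrm{ev}_c\colon H\to X$, one shows that a positive-dimensional irreducible component $H$ of $\underline{\Hom}^{nc}(C,X)$ cannot have dimension $2$ (otherwise the $\mathrm{ev}_c$ would all be dominant, hence only finitely many of them distinct, forcing every map in $H$ to have finite---so constant---image), and is therefore an irreducible curve whose normalization has genus $\geq 2$. This dimension bound is precisely what makes the ordinary Mordell conjecture (not Mordell--Lang) sufficient. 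Your substitute---compose with $\widetilde{C}\hookrightarrow\Jac(\widetilde{C})$ and $X\to\Alb(X)$ and appeal to Mordell--Lang---does not work in general: there exist Brody hyperbolic projective surfaces of general type with $q(X)=0$ (for instance, fake projective planes, which are ball quotients). For such $X$ one has $\Alb(X)=0$, so the Albanese composition $\widetilde{C}\to X\to\Alb(X)$ is constant and carries no information, and even when $q(X)\geq 1$ the Albanese map can contract the relevant image curves. Nothing in your argument independently bounds the dimension of the Hom-scheme component $M$, so if $\dim M=2$ one would be faced with proving finiteness of $L$-points on a hyperbolic general-type surface, which is Lang's conjecture and is not known. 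The missing idea is the Kobayashi--Ochiai / evaluation-map dimension bound of Theorem \ref{thm:abstract2}; you treat this step as organizational, but it is the mathematical heart of the proof.
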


Note that the analogous statement over algebraically closed fields fails, so that the assumption that $K$ is finitely generated can not be dropped in Theorem \ref{thm:brody}. For example, if $C$ is a smooth projective curve of genus at least two over $\mathbb{C}$ and $X=C\times C$, then the set of non-constant morphisms $C\to X$  is (obviously) infinite.

Since a smooth projective variety over $\mathbb{C}$ with ample cotangent bundle is Brody hyperbolic, we directly obtain the following finiteness result.

\begin{theorem}\label{thm:omega1}
Let $X$ be a   smooth projective     surface   over a finitely generated field $K$ of characteristic zero such that    $\Omega^1_X$ is ample. Then, for every finitely generated extension $L/K$ and every variety $V$ over $L$, the set of non-constant rational maps $f:V\dashrightarrow  X_L$ over $L$  is finite.
\end{theorem}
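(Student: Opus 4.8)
The plan is to deduce Theorem \ref{thm:omega1} directly from Theorem \ref{thm:brody}; there is essentially no new content beyond one classical input. First I would choose, using that $K$ is a finitely generated field of characteristic zero, an embedding $\sigma\colon K\to \mathbb{C}$. To apply Theorem \ref{thm:brody} (for this $\sigma$) it then suffices to check that $X_{\mathbb{C}}$ is Brody hyperbolic, and the finiteness assertion for all finitely generated $L/K$ and all varieties $V$ over $L$ follows verbatim.

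To verify Brody hyperbolicity, I would first note that ampleness of a vector bundle on a projective scheme is stable under extension of the base field, so $\Omega^1_{X_{\mathbb{C}}}\cong (\Omega^1_X)_{\mathbb{C}}$ is ample on $X_{\mathbb{C}}$. Then I invoke the classical fact, to be cited from the literature (e.g.\ Kobayashi's book, or Debarre's work on varieties with ample cotangent bundle), that a smooth projective variety over $\mathbb{C}$ with ample cotangent bundle is Brody hyperbolic. The mechanism behind this input is that ampleness of $\Omega^1_{X_{\mathbb{C}}}$ equips the tangent bundle with a (possibly singular) Finsler metric of negative holomorphic sectional curvature, so the Ahlfors--Schwarz lemma forces every holomorphic map $\mathbb{C}\to X_{\mathbb{C}}^{\an}$ to be constant; I would not reprove this.

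Combining these two observations, the hypothesis of Theorem \ref{thm:brody} is met, and its conclusion is exactly the statement of Theorem \ref{thm:omega1}.

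Since the proof is a formal corollary, the only point requiring care is bibliographic: one should pin down a reference for ``ample cotangent bundle $\Rightarrow$ Brody hyperbolic'' in the vector-bundle setting (rather than merely the line-bundle case), and double-check that the base-change stability of ampleness is applied to the cotangent bundle of the smooth projective $X$ rather than to $X$ itself. Beyond that, no genuine obstacle arises.
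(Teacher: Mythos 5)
Your argument is exactly the paper's: choose an embedding $K\to\CC$, observe that ampleness of $\Omega^1_X$ passes to $\Omega^1_{X_\CC}$, invoke the classical theorem (Kobayashi, Theorem 3.6.21) that ample cotangent bundle implies Brody hyperbolicity, and then quote Theorem \ref{thm:brody}. No gap, same route.
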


Our next result establishes a similar finiteness result for certain surfaces of general type. The surfaces we will consider are neither Brody hyperbolic nor have ample cotangent bundle, so that we can not appeal to the above results. On the other hand, the surfaces we consider are "pseudo-algebraically hyperbolic" \cite[Definition~9.2]{JBook}, and it turns out that this suffices to prove a similar finiteness result.

 \begin{theorem}\label{thm:bogomolov}
Let $X$ be a smooth projective surface of general type with $c_1^2(T_X) > c_2(T_X)$ over a finitely generated field $K$ of characteristic zero. Then, there is a  proper closed subset $Z\subset X$ such that, for every finitely generated extension $L/K$ and every variety $V$ over $L$, the set of non-constant rational maps $f:V\dashrightarrow X_L$ over $L$ with $f(V)\not\subset Z_L$ is finite.  
\end{theorem}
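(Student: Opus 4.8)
The plan is to combine the hyperbolicity of $X$—its pseudo-algebraic hyperbolicity, which by \cite{JBook} is exactly what the inequality $c_1^2(T_X) > c_2(T_X)$ provides (with an exceptional locus $Z\subset X$ and a degree bound for curves not in $Z$), together with the fact that $X$ is of general type—with classical finiteness theorems (Kobayashi–Ochiai, de Franchis) and with Faltings's finiteness theorems applied to Hom-schemes of curves into $X$.

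After fixing an embedding $L\hookrightarrow\CC$ so that $X_\CC$ is pseudo-algebraically hyperbolic, fix such a $Z$. Since a rational map depends only on a dense open and $X$ is projective, for any $f:V\dashrightarrow X_L$ we may replace $V$ by a smooth projective model of a compactification (Hironaka); so assume $V$ is smooth projective over $L$. Given a non-constant $f:V\dashrightarrow X_L$ with $f(V)\not\subset Z_L$, either $f$ is dominant—and then $f$ belongs to a finite set by the theorem of Kobayashi and Ochiai, since $X$ is of general type—or $f(V)$ is a curve $Y\subset X_L$ not contained in $Z_L$. In the latter case $f$ factors as $V\dashrightarrow \widetilde Y\to Y\hookrightarrow X_L$ with $V\dashrightarrow\widetilde Y$ dominant and $\widetilde Y$ the normalization; since $Y\not\subset Z_L$, the curve $\widetilde Y$ has genus at least two. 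By de Franchis's theorem (and its extension to higher-dimensional sources) the curve $\widetilde Y$ lies in one of finitely many isomorphism classes, and for each fixed such curve $B$ the dominant rational maps $V\dashrightarrow B$ form a finite set (de Franchis–Severi). Thus everything reduces to: for each smooth projective curve $B$ of genus at least two over $L$, the set $N_B$ of non-constant morphisms $B\to X_L$ over $L$ with image not contained in $Z_L$ is finite.

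For this, pseudo-algebraic hyperbolicity gives $\deg_B h^*A\le\alpha(2g(B)-2)$ for every $h\in N_B$ (with $A$ a fixed ample divisor on $X$ and $\alpha$ depending only on $(X,A,Z)$), so $N_B\subset\mathcal M^\circ(L)$ where $\mathcal M^\circ$ is a finite-type open subscheme of $\Hom_L(B,X_L)$—the locus of non-constant maps of bounded degree with image not in $Z$. Let $\mathcal M_0$ be an irreducible component defined over $L$ (others carry no $L$-point). If $\dim\mathcal M_0=0$ we are done. If $\dim\mathcal M_0\ge 1$, then the image curves $h(B)$, $h\in\mathcal M_0$, are not all equal (else de Franchis bounds $\mathcal M_0$), are mutually algebraically equivalent, and their normalizations lie in one isomorphism class $B'$ of curves of genus $\ge 2$ (de Franchis again); so they form an isotrivial algebraic family of genus-$\ge 2$ curves sweeping out the surface $X$, with parameter variety $\mathcal Y_0$, and $\mathcal M_0\to\mathcal Y_0$ is quasi-finite (fibres finite by de Franchis), so $\dim\mathcal M_0=\dim\mathcal Y_0$. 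When $\dim\mathcal Y_0=1$: passing to a finite étale cover $\mathcal Y_0'\to\mathcal Y_0$ trivializing the family yields a dominant generically finite rational map $B'\times\overline{\mathcal Y_0'}\dashrightarrow X$, whence $\kappa(B'\times\overline{\mathcal Y_0'})\ge\kappa(X)=2$; by Iitaka additivity of Kodaira dimension for products $\kappa(\overline{\mathcal Y_0'})\ge 1$, so $\overline{\mathcal Y_0'}$, and hence $\overline{\mathcal Y_0}$ and $\overline{\mathcal M_0}$, are smooth projective curves of genus at least two. By Faltings's theorem (Mordell's conjecture) over the finitely generated field $L$, $\overline{\mathcal M_0}(L)$ is finite, hence so is $\mathcal M_0(L)$; summing over the finitely many components shows $N_B$ is finite.

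I anticipate the main obstacle to be the case $\dim\mathcal M_0\ge 2$—equivalently, an isotrivial family $\mathcal Y_0$ of genus-$\ge 2$ curves on $X$ of dimension $\ge 2$. There the Kodaira-dimension comparison no longer forces the parameter variety to be a curve, and "$X$ of general type" alone does not suffice: one must use the Bogomolov inequality itself, i.e. the bigness of $\Omega^1_X$, either to rule such a family out (showing it is at most one-dimensional, for instance via a semistability estimate bounding $\dim\HH^0(B,h^*T_X)\le 1$) or to put the residual cases into the scope of the Mordell–Lang conjecture (Faltings's theorem on subvarieties of abelian varieties), applied to the image of $\mathcal Y_0$ under the Abel–Jacobi map to $\Pic^0(X)$, after checking—again using $c_1^2>c_2$—that this image contains no positive-dimensional translated abelian subvariety. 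Making these points rigorous, together with the reduction of an arbitrary source $V$ to finitely many curve-targets $B$, is where the real work lies; the remaining ingredients (finite-typeness of the Hom-scheme, Kobayashi–Ochiai, de Franchis, Iitaka additivity, and Faltings's finiteness theorems over finitely generated fields) are standard.
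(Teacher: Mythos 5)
The high-level strategy is right --- reduce to a Hom-scheme, bound its dimension, show positive-dimensional components are curves of genus at least two, and apply Faltings --- but you have not closed the crucial step, and you say so yourself. The gap is precisely the claim that an irreducible component $\mathcal M_0$ of $\underline\Hom^{nc}_L(B,X_L)\setminus\underline\Hom_L(B,Z_L)$ has dimension at most one and, when equal to one, compactifies to a curve of genus at least two. Your treatment of $\dim\mathcal M_0=1$ (isotriviality of the swept family, a finite \'etale trivializing cover, Iitaka additivity) is already more delicate than you indicate --- you need the family to actually be isotrivial, the covering $\mathcal Y_0'\to\mathcal Y_0$ to exist and descend, and the product map $B'\times\overline{\mathcal Y_0'}\dashrightarrow X$ to be generically finite --- and you concede outright that $\dim\mathcal M_0\ge 2$ is an open problem in your argument. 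Without ruling that case out the proof does not go through.

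The paper's mechanism for bounding $\dim H$ is different, shorter, and worth internalizing: fix the component $H$ and consider the family of evaluation maps $\mathrm{ev}_c\colon H\to X$, one for each $c\in C(k)$. Bogomolov's bound together with bend-and-break shows each $\mathrm{ev}_c$ has finite fibres over $X\setminus Z$, hence $\dim H\le 2$. If $\dim H=2$, then \emph{every} $\mathrm{ev}_c$ is dominant onto the general-type surface $X$, so by Kobayashi--Ochiai the set $\{\mathrm{ev}_c\mid c\in C(k)\}$ is finite; since $C(k)$ is infinite this forces $\mathrm{ev}_c=\mathrm{ev}_{c'}$ for infinitely many pairs, i.e. every $f\in H$ is constant, contradicting $H\subset\underline\Hom^{nc}$. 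Thus $\dim H=1$. The genus bound on $\overline H$ is then immediate from pseudo-geometric hyperbolicity modulo $Z$: choosing $c$ with $\mathrm{ev}_c(H)\not\subset Z$, the induced map $\overline H\to X$ is non-constant with image not in $Z$, and any rational or genus-one curve mapping non-constantly into $X$ must land in $Z$ (otherwise the pointed Hom-set would be infinite, via automorphisms of $\PP^1$ or translations-and-multiplications on an elliptic curve). This eliminates the case split on $\dim\mathcal M_0$ entirely, and it also replaces your de Franchis--Severi reduction to finitely many targets $B$ by the paper's simpler inductive cutting lemma (Lemma \ref{lemma:cutting}), which handles sources $V$ of arbitrary dimension by slicing with an ample divisor through a carefully chosen very general point. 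You do not need Iitaka additivity, isotrivial families, the Albanese, or the Mordell--Lang theorem anywhere; the surface case is closed by Mordell alone once the evaluation-map argument is in place.
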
 

One can not take $Z$ to be the empty set in Theorem \ref{thm:bogomolov}. Indeed, there exists a smooth projective surface of general type with $c_1^2(T_X)> c_2(T_X)$ which contains rational curves. The subset $Z$ in Theorem \ref{thm:bogomolov} will forcefully contain these rational curves. In fact, the subset $Z$ can be taken to be the union of images of non-constant maps $E\to X_{\overline{K}}$, where $E$ runs over all elliptic curves over $\overline{K}$.

To prove the above three results we rely on two  abstract  results (Theorems \ref{thm:abstract2} and \ref{thm:abstract} below). These results may be considered as the main technical results of this note.

Before we state our results, we introduce   notation for Hom-schemes and also take the opportunity to briefly recall some fundamental properties of such schemes.  Namely, for $X$ and $Y$ projective schemes over a field $k$,  Grothendieck proved that the set-valued functor 
\[
\mathrm{Sch}/k^{\textrm{op}} \to \mathrm{Sets}
\] which associates to a $k$-scheme $T$ the set $\mathrm{Isom}_T(X_T, Y_T)$ of isomorphisms $X_T\to Y_T $ over $T$ is representable by a locally finite type $k$-scheme   $\underline{\Hom}_k(X,Y)$; Grothendieck's proof uses the existence   of Hilbert schemes \cite{FGA} (see  also   \cite[Theorem~6.6]{Nitsure} for a proof).   We  will  refer to $\underline{\Hom}_k(X,Y)$ as the \emph{moduli space} (or: \emph{Hom-scheme}) of morphisms from $X$ to $Y$; note that   the set of $k$-points $\underline{\Hom}_k(X,Y)(k)$ of $\underline{\Hom}_k(X,Y)$   is the set of morphisms from $X$ to $Y$ over $k$. Since the Hom-scheme $\underline{\Hom}_k(X,Y)$  is an open subscheme of the Hilbert scheme $\mathrm{Hilb}_{X\times_k Y/k}$ of $X\times_k Y$,  it is a countable disjoint union of quasi-projective schemes over $k$ (indexed by Hilbert polynomials). We stress that $\underline{\Hom}_k(X,Y)$ might   not be quasi-projective, as it might have infinitely many connected components (e.g., $\underline{\Hom}_k(\mathbb{P}^1_k,\mathbb{P}^1_k)$).

We define $\underline{\Hom}^c_k(X,Y)\subset \underline{\Hom}_k(X,Y)$ 
to be the subscheme  parametrizing the constant maps from $X$ to $Y$.  Note that  this subscheme of $\underline{\Hom}_k(X,Y)$ is isomorphic to $Y$. We let $\underline{\Hom}_k^{nc}(X,Y)$ be the moduli space of non-constant morphisms from $X$ to $Y$, and note that this is the complement of $\underline{\Hom}_k^c(X,Y)$. Finally, if $Z\subset Y$ is a closed subscheme, then the natural morphism $\underline{\Hom}_k(X,Z)\to\underline{\Hom}_k(X,Y)$ is a closed immersion.  The scheme $\underline{\Hom}_k(X,Y)\setminus \underline{\Hom}_k(X,Z)$ parametrizes morphisms $f:X\to Y$ such that $f(X)\not\subset Z$.

We can now state the first main technical result.

 \begin{theorem} \label{thm:abstract2} Let $X$ be a projective surface over  an algebraically closed field $k$ of characteristic zero   and let $Z\subsetneq X$ be a proper closed subset. 
 
 Assume that, for  every curve $C$ over $k$, every $c$ in $C(k)$,  and every $x$ in $X(k)\setminus Z(k)$, the set of morphisms $f:C\to X$ with $f(c)=x$ is finite. 
 
 If $C$ is a  smooth projective curve over $k$ and $H$ is a   positive-dimensional irreducible (reduced) component  of the scheme $ \underline{\Hom}^{nc}_k(C,X)\setminus \underline{\Hom}_k(C,Z)$, then $H$  is birational to a smooth projective curve of genus at least two over $k$. 
\end{theorem}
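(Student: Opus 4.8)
The plan is to argue in three steps, the last of which is the real work.

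\textbf{Step 1: curves in $X$ lying off $Z$ have genus $\geq 2$.} First I would show that the hypothesis forces every smooth projective curve $\Gamma$ over $k$ admitting a non-constant morphism $\nu\colon\Gamma\to X$ with $\nu(\Gamma)\not\subset Z$ to have genus at least two. If $\Gamma\cong\PP^1$, the orbit of $\nu$ under $\Aut(\PP^1)=\PGL_2$ is positive-dimensional (its stabiliser is finite, $\nu$ being non-constant), so after fixing a point $x\in\nu(\PP^1)(k)\setminus Z(k)$ and a preimage under $\nu$ one gets a positive-dimensional family of morphisms $f\colon\PP^1\to X$ with $f(0)=x$, contradicting the hypothesis. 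If $\Gamma$ has genus one, regard it as an elliptic curve with origin chosen so that $\nu(0)\notin Z$; the morphisms $\nu\circ[n]$, $n\geq 1$, are pairwise distinct (they have pairwise distinct degrees onto $\nu(\Gamma)$) and all send $0$ to $\nu(0)\notin Z$, again contradicting the hypothesis. In particular $g(C)\geq 2$; otherwise $H=\emptyset$, by Riemann--Hurwitz applied to $C\to\overline{f(C)}$ for any $f\in H$.

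\textbf{Step 2: $\dim H\leq 2$, and the genus bound when $\dim H=1$.} Let $\ev\colon C\times H\to X$ be the evaluation morphism of the universal family. For $f\in H$ the fibre of $\ev^{-1}(Z)\to H$ over $f$ is $f^{-1}(Z)\times\{f\}$, a proper closed subset of $C\times\{f\}$ since $f(C)\not\subset Z$, hence finite; thus $\dim\ev^{-1}(Z)\leq\dim H$, so $\ev^{-1}(Z)\subsetneq C\times H$. Therefore for a general $c_0\in C(k)$ the curve $\{c_0\}\times H$ is not contained in $\ev^{-1}(Z)$, i.e.\ $\ev_{c_0}(H)\not\subset Z$; shrinking $c_0$ we may also assume $\ev_{c_0}$ is non-constant (it cannot be constant for all $c_0$, else all points of $H(k)$ would coincide). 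By the hypothesis applied with the curve $C$ and the point $c_0$, the morphism $\ev_{c_0}$ has finite fibres over $X\setminus Z$, and since $\ev_{c_0}^{-1}(X\setminus Z)$ is dense in the irreducible $H$ this gives $\dim H\leq 2$. If moreover $\dim H=1$, then $D_0:=\overline{\ev_{c_0}(H)}$ is an irreducible curve with $D_0\not\subset Z$, so by Step 1 its normalisation $\overline{D_0}$ has genus $\geq 2$; passing to normalisations and smooth compactifications, $\ev_{c_0}$ induces a non-constant finite morphism $\overline{H}\to\overline{D_0}$ of smooth projective curves, whence $g(\overline{H})\geq g(\overline{D_0})\geq 2$ by Riemann--Hurwitz. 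This is exactly the conclusion in the case $\dim H=1$.

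\textbf{Step 3: ruling out $\dim H=2$ — the main obstacle.} Factoring each $f\in H$ as $C\xrightarrow{\pi_f}\overline{f(C)}\to X$ through the normalisation of its image (of genus $\geq 2$ by Step 1), de Franchis--Severi shows the pairs $(\overline{f(C)},\pi_f)$ lie in finitely many isomorphism classes; the generic one determines a smooth projective curve $\Gamma$, a surjection $\pi\colon C\to\Gamma$, and an embedding of a dense open irreducible subset of $H$ into $\underline{\Hom}^{nc}_k(\Gamma,X)\setminus\underline{\Hom}_k(\Gamma,Z)$ (via $f\mapsto\iota_f$ where $f=\iota_f\circ\pi$), landing in a positive-dimensional component there. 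If $\pi$ is not an isomorphism then $g(\Gamma)<g(C)$ by Riemann--Hurwitz, and we conclude by induction on $g(C)$; so we may assume $\Gamma=C$ and $\pi=\mathrm{id}$, i.e.\ the generic $f\in H$ is birational onto its image. Then $\ev\colon C\times H\to X$ is dominant (otherwise all $f\in H$ have one common image $D$ with $\overline{D}\cong C$ of genus $\geq 2$, and de Franchis makes $H$ finite), the map $f\mapsto f(C)$ is generically finite onto a $2$-dimensional family of curves on $X$, and restricting the universal family to a general curve $B\subset H$ gives a dominant generically finite morphism $C\times B\to X$; transposing over $C$ and extending to the smooth compactification $\overline{B}$ produces a non-constant rational map $C\dashrightarrow\underline{\Hom}^{nc}_k(\overline{B},X)\setminus\underline{\Hom}_k(\overline{B},Z)$. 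The hard part is to convert this $2$-parameter freedom of the family $\{f(C)\}$, together with the hypothesis, into a contradiction — concretely, to produce a curve $C'$, a point $c'\in C'(k)$ and a point $x\in X(k)\setminus Z(k)$ admitting infinitely many morphisms $C'\to X$ with $c'\mapsto x$. I expect this bound — the dimension of a family of curves on $X$ versus the hyperbolicity hypothesis — to be the genuine obstacle; once $\dim H=2$ is excluded, Step 2 finishes the proof.
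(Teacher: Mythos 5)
Your Steps 1 and 2 are correct and follow the same strategy as the paper: the $\PGL_2$-orbit argument and the $\nu\circ[n]$ argument to exclude curves of genus $\leq 1$ off $Z$, and the evaluation map $\ev_{c_0}$ (for a general $c_0$ with $\ev_{c_0}(H)\not\subset Z$) having finite fibres over $X\setminus Z$ to get $\dim H\leq 2$ and the genus bound when $\dim H=1$. The decisive gap is exactly where you flagged it: Step 3, excluding $\dim H=2$. Your de Franchis--Severi reduction to the case where the generic $f\in H$ is birational onto its image is a reasonable normalisation, but it does not produce the needed contradiction, and you say so yourself. The ingredient you are missing is Kobayashi--Ochiai's finiteness theorem for dominant maps to a variety of general type (Theorem~\ref{thm:ko}), together with the preliminary observation that the hypothesis forces $X$ to be of general type (the paper deduces this from pseudo-geometric hyperbolicity of the surface $X$ via the classification of surfaces, citing \cite{JXie}). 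With that in hand the argument is short: if $\dim H=2$, then for $c$ in a dense open subset of $C$ the map $\ev_c\colon H\to X$ is generically finite with $2$-dimensional image, hence \emph{dominant}; by Kobayashi--Ochiai the collection $\{\ev_c\}$ over such $c$ takes only finitely many values, so $\ev_c$ is constant along an infinite subset of $C(k)$, which forces every $f\in H$ to be constant on a Zariski-dense subset of $C$, hence constant --- contradicting $H\subset\underline{\Hom}^{nc}_k(C,X)$. Your line ``$\ev$ is dominant (otherwise \dots de Franchis makes $H$ finite)'' comes close to the right idea but stops short: dominance of the family is not itself a contradiction; what is needed is the finiteness statement for dominant maps from the \emph{parameter space} $H$ itself, which is precisely what Kobayashi--Ochiai provides once one knows $X$ is of general type. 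As written, your proposal establishes the theorem only conditionally on $\dim H\neq 2$, so it is incomplete.
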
 

Note that this   result is concerned with the structure of the moduli space of maps from a curve to a "pseudo-geometrically hyperbolic" projective surface \cite[Definition~11.2]{JBook}.

The proof of Theorem \ref{thm:abstract2} relies on Kobayashi-Ochiai's finiteness theorem for dominant maps from a variety to a variety of general type (Theorem \ref{thm:ko}). We apply it to the (a priori infinitely many) evaluation maps with domain the moduli space of maps from a curve $C$ to $X$ to control its dimension.

Our second technical result requires a slightly stronger, but conjecturally equivalent assumption (see (8) and (9) in \cite[Conjecture~12.1] {JBook}), and provides an arithmetic finiteness conclusion.

\begin{theorem} \label{thm:abstract}   Let $X$ be a projective surface over a finitely generated field $K$ of characteristic zero    and let $Z\subsetneq X$ be a proper closed subset.

 Assume that there is an embedding  $K\to \mathbb{C}$ such that, for  every smooth projective curve $C$  over $\CC$,  the scheme $\underline{\Hom}_{\CC}(C,X_{\CC})\setminus \underline{\Hom}_{\CC}(C, Z_{\CC})$ is of finite type over $\CC$. 
 
 Then, for every finitely generated field extension $L/K$ and every variety $V$ over $L$,   the set of non-constant rational maps $f:V\dashrightarrow X_L$ with $f(V)\not\subset Z_L$ is finite.
\end{theorem}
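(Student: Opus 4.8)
The plan is to apply Faltings's theorem on rational points of curves of genus at least two (over finitely generated fields) to the positive-dimensional components of the relevant Hom-schemes, after reducing to the case where $V$ is a smooth projective curve. Throughout we may take $L=K$: extending the given embedding $K\to\mathbb{C}$ to an embedding $L\to\mathbb{C}$, the hypothesis is inherited by $X_L$ and $Z_L$, so the general case is the special case applied over $L$. Before the reduction I would record two consequences of the boundedness hypothesis. First, every irreducible curve $D\subseteq X_{\mathbb{C}}$ of geometric genus at most one is contained in $Z_{\mathbb{C}}$: otherwise, post-composing with the finite birational map $\widetilde D\to D\hookrightarrow X_{\mathbb{C}}$ embeds $\underline{\Hom}^{nc}_{\mathbb{C}}(\widetilde D,\widetilde D)$ — which, since $\widetilde D$ is $\mathbb{P}^1$ or an elliptic curve, has infinitely many connected components — into $\underline{\Hom}^{nc}_{\mathbb{C}}(\widetilde D,X_{\mathbb{C}})\setminus\underline{\Hom}_{\mathbb{C}}(\widetilde D,Z_{\mathbb{C}})$, contradicting finiteness of type. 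Second, $X$ is of general type: a non-general-type surface which is not birational to a simple abelian surface carries a covering family of rational or of elliptic curves, hence — as $Z$ is proper — contains such a curve off $Z$, contradicting the first consequence; while a simple abelian surface contains curves admitting non-constant maps of unbounded degree from a fixed curve (precompose with multiplication by $n$), again forcing infinitely many components in a Hom-scheme as in the hypothesis. By the Lefschetz principle both statements persist over $\overline{L}$.

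\emph{Reduction to curves.} Replacing $V$ by a resolution of a projective compactification (which does not identify distinct rational maps), we may assume $V$ smooth and projective. A non-constant rational map $f\colon V\dashrightarrow X_L$ with $f(V)\not\subset Z_L$ is either dominant — and there are only finitely many such, by Kobayashi-Ochiai's theorem (Theorem~\ref{thm:ko}) and the fact that $X$ is of general type — or else $f(V)$ is a curve $D_f\not\subset Z_L$, which has geometric genus at least two by the first consequence; in the latter case $f$ factors as $\iota_f\circ\pi_f$, where $\pi_f\colon V\dashrightarrow\widetilde D_f$ is a dominant rational map onto the (smooth projective, genus $\ge 2$) normalization $\widetilde D_f$ of $D_f$ and $\iota_f\colon\widetilde D_f\to X_L$ is finite and birational onto $D_f$. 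By the de Franchis-Severi finiteness theorem for pencils of curves of genus at least two there are only finitely many possibilities for $(\widetilde D_f,\pi_f)$ up to isomorphism, and for each fixed such pair the assignment $f\mapsto\iota_f$ injects the corresponding maps $f$ into $\bigl(\underline{\Hom}^{nc}_L(\widetilde D_f,X_L)\setminus\underline{\Hom}_L(\widetilde D_f,Z_L)\bigr)(L)$. It therefore suffices to prove that $\bigl(\underline{\Hom}^{nc}_L(C,X_L)\setminus\underline{\Hom}_L(C,Z_L)\bigr)(L)$ is finite for every smooth projective curve $C$ over $L$.

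\emph{The case of curves, via Theorem~\ref{thm:abstract2} and Faltings.} Set $\mathcal H=\underline{\Hom}^{nc}_L(C,X_L)\setminus\underline{\Hom}_L(C,Z_L)$. By hypothesis $\mathcal H_{\mathbb{C}}$ is of finite type over $\mathbb{C}$, so $\mathcal H$ is of finite type over $L$ and has finitely many irreducible components, the zero-dimensional ones contributing only finitely many $L$-points. Let $H$ be a positive-dimensional irreducible component. I claim each geometric component of $H$ is birational to a smooth projective curve of genus at least two; granting this, $H$ is one-dimensional and its smooth projective model is a smooth projective curve over $L$ whose geometric components all have genus at least two, so $H(L)$ is finite by Faltings's theorem over finitely generated fields (applied to the geometrically connected pieces over finite extensions of $L$), which finishes the proof. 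To prove the claim I apply Theorem~\ref{thm:abstract2} over $k=\overline{L}$, whose hypothesis I must check: for every curve $C_0$ over $\overline{L}$, every $c_0\in C_0(\overline{L})$ and every $x_0\in X(\overline{L})\setminus Z(\overline{L})$, only finitely many morphisms $f\colon C_0\to X_{\overline{L}}$ satisfy $f(c_0)=x_0$. By a specialization argument it suffices to treat $\overline{L}=\mathbb{C}$, and we may take $C_0$ smooth projective of genus at least two, since for genus at most one the image of a non-constant such $f$ would be a rational or elliptic curve through $x_0\notin Z$, excluded above. If infinitely many such $f$ existed they would sweep out a positive-dimensional subvariety $B$ of the finite-type scheme $\underline{\Hom}^{nc}_{\mathbb{C}}(C_0,X_{\mathbb{C}})\setminus\underline{\Hom}_{\mathbb{C}}(C_0,Z_{\mathbb{C}})$, and the total map $u\colon C_0\times B\to X_{\mathbb{C}}$ would contract the horizontal curve $\{c_0\}\times B$ to the point $x_0$; a rigidity/negativity argument on a proper compactification of $B$ — using the proper moduli space of stable maps to control the boundary — then forces $u$ not to be dominant, so all these maps have image in a single irreducible curve $D\subseteq X_{\mathbb{C}}$ with $x_0\in D$, whence $D\not\subset Z$ and $D$ has geometric genus at least two; de Franchis's theorem now bounds the number of non-constant maps $C_0\to\widetilde D$, a contradiction. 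Thus the hypothesis of Theorem~\ref{thm:abstract2} holds over $\overline{L}$, and the claim follows.

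\emph{The main difficulty.} The crux is this last step: deducing from the mere boundedness of the Hom-schemes the ``pointed'' finiteness needed to invoke Theorem~\ref{thm:abstract2}. Boundedness by itself permits positive-dimensional families of maps, so ruling out a positive-dimensional family of maps all sending a fixed point to a fixed value off $Z$ forces one to use the rigidity/contraction input together with a careful analysis of how the family degenerates over the boundary of its parameter space — and it is precisely here that the surface geometry of $X$, in particular the absence of rational and elliptic curves off $Z$, is genuinely used.
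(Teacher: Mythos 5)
Your overall architecture matches the paper's (reduce to curves, apply Theorem~\ref{thm:abstract2}, then invoke Faltings on the genus~$\geq 2$ components of a finite-type Hom-scheme), but with one genuinely different ingredient and one genuine gap.

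\emph{What is different.} Your reduction to curves is not the paper's. The paper's Lemma~\ref{lemma:cutting} argues by induction on $\dim V$ via a Bertini-type hyperplane cut: given infinitely many $f_i$, pick a very general ample hyperplane $H$ through a point where all $f_i$ disagree and avoid $Z$, then restrict. You instead split off the finitely many dominant maps via Kobayashi-Ochiai and observe that each remaining non-constant rational map factors through its image curve $D_f$ (of genus $\geq 2$ by your first observation); you then control the pairs $(\widetilde D_f, \pi_f)$ by the de Franchis-Severi finiteness theorem for fibrations to curves of genus $\geq 2$, and inject what remains into the Hom-scheme from a fixed $\widetilde D_f$. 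This is a correct alternative route which avoids the uncountable-field and generic-hyperplane machinery entirely, at the cost of importing de Franchis-Severi; it also avoids the paper's Lemma~\ref{lem:gt}.(2) about extension across indeterminacy.

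\emph{Where there is a gap.} The step you yourself flag as ``the main difficulty'' is not actually completed. To invoke Theorem~\ref{thm:abstract2} you must show that for each smooth projective curve $C_0$, $c_0\in C_0$, and $x_0\in X\setminus Z$, the pointed Hom-set $\{f\colon C_0\to X \mid f(c_0)=x_0\}$ is finite. The paper does this cleanly: since the relevant Hom-scheme is of finite type, the degrees are bounded, and Mori's bend-and-break (\cite[Proposition~3.1]{Debarrebook1}) produces a rational curve through $x_0$ from any positive-dimensional family of pointed maps --- contradicting the fact that $Z$ contains all rational curves and $x_0\notin Z$. Your substitute (``a rigidity/negativity argument on a proper compactification of $B$, using the moduli space of stable maps to control the boundary, forces $u$ not to be dominant'') is a plan, not a proof, and the intermediate conclusion you draw from it is also suspect: there is no direct reason why $u$ fails to be dominant, and the only mechanism I know for ruling out a positive-dimensional pointed family here is bend-and-break itself, which does not output ``$u$ not dominant'' but rather ``a rational curve through $x_0$''. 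Your subsequent detour (image lies in a single curve $D\not\subset Z$, then de Franchis) is then unnecessary once bend-and-break is invoked, because the rational curve already gives the contradiction. So the fix is simply to cite bend-and-break where you currently wave at a rigidity argument; once you do so, the detour via non-dominance can be dropped.

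A smaller point: your Lefschetz-principle invocations (``the hypothesis is inherited by $X_L$'', ``by a specialization argument it suffices to treat $\overline L=\mathbb{C}$'') are correct in spirit but loose; the paper points to \cite[\S 9]{vBJK} for the precise spreading-out argument that the finite-type hypothesis and the absence of rational curves off $Z$ persist over all algebraically closed fields of characteristic zero containing $K$, and this is genuinely needed before applying Theorem~\ref{thm:abstract2} and Faltings over $\overline{L}$.
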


 Using the terminology of \cite{JBook} and \cite{EJR}, Theorem \ref{thm:abstract} implies that, 
if $X_{\CC}$ is $1$-bounded modulo $Z_{\CC}$ over $\CC$, then  $X$ is $p$-Mordellic modulo $Z$ for every $p>0$.

Our proof of Theorem \ref{thm:abstract} relies on Faltings's theorem (formerly Mordell's conjecture) and Theorem \ref{thm:abstract2}. In fact, we apply Faltings's theorem to the moduli space of non-constant maps from $V$ to $X$ in the case that $\dim V=1$. We then use a cutting argument to prove the desired finiteness statement for all varieties (by induction on the dimension of $V$). Here, it is imperative that  Theorem \ref{thm:abstract2} holds over \emph{all} finitely generated fields of characteristic zero, and does not involve any  restriction to number fields.  

We stress that  no arithmetic methods are  employed in our proofs. The main observation is simply that using known arithmetic finiteness results (due to Faltings) and algebro-geometric arguments, one can obtain finiteness results on non-constant rational maps $V\dashrightarrow X$ defined over a finitely generated field of characteristic zero.

As already explained above, all of our  results (except for Theorem \ref{thm:abstract2}) rely on Faltings's finiteness theorems for curves (i.e., Mordell's conjecture). By exploiting Faltings's finiteness theorem for closed subvarieties of abelian varieties (i.e., the Mordell-Lang conjecture), we can also obtain new finiteness results for  ramified covers of   simple abelian varieties.

   More precisely, our final  result is concerned with rational points on finite surjective \textbf{ramified} covers $\pi:X\to A$ of an abelian variety $A$ over a finitely generated field $K$ of characteristic zero with $X$ a normal variety over $K$. As explained in the introduction of \cite{CDJLZ}, Lang's conjecture implies that  the $K$-rational points $X(K)$ on such a variety $X$ is \textbf{not} dense in $X$. Moreover, if $A$ is assumed to be simple, then Lang's conjecture even predicts that $X(K)$ is finite. Neither of these conjectures are currently   known. 
 (On the positive side,  in \cite{CDJLZ} it was shown that $X$ has "fewer" points than the abelian variety it covers. More precisely: if $A(K)$ is dense, then $A(K)\setminus \pi(X(K))$ is still dense.)
 
Our final result provides a   bit of evidence for Lang's prediction for ramified covers of simple abelian varieties.  

\begin{theorem}\label{thm:ram}
Let $A$ be a geometrically simple abelian variety  over  a finitely generated field $K$ of characteristic zero, and let $X\to A$ be a finite surjective non-\'etale morphism with $X$ a normal variety over $K$. Then,   for every finite extension $L/K$ and every variety $V$ over $L$, the set of non-constant rational maps $f:V\dashrightarrow X_L$ over $L$   is finite.
\end{theorem}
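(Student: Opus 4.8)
\emph{Proof proposal.} The plan is to imitate the deduction of Theorem~\ref{thm:abstract} from Theorem~\ref{thm:abstract2}, but with Faltings's theorem on rational points of subvarieties of abelian varieties (Mordell--Lang) in place of Faltings's theorem on curves, and with a structural statement about Hom-schemes adapted to ramified covers of simple abelian varieties in place of the surface input. First I would reduce, by the same cutting argument and induction on $\dim V$ used to prove Theorem~\ref{thm:abstract} from Theorem~\ref{thm:abstract2}, to the case where $V=C$ is a smooth projective curve over $L$; as there, it is essential that the structural statement below holds over \emph{all} finitely generated fields of characteristic zero. Separately, the case $\dim A=1$ is handled directly: then $X$ is a normal, hence smooth, projective curve which is a ramified cover of an elliptic curve, so has genus at least two, and finiteness of non-constant maps into such a curve (for all $V$, over all finitely generated fields) is classical --- e.g.\ embed $X$ in its Jacobian and combine Mordell--Lang with the structure of morphisms to an abelian variety, or invoke de Franchis. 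So I assume $\dim A\geq 2$ from now on; note that then, since $A$ is geometrically simple, $A$ contains no curve of geometric genus $\leq 1$ (its normalization would give a non-constant homomorphism from an elliptic curve to $A$, hence a one-dimensional abelian subvariety of $A$).

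The heart of the matter is the following analogue of Theorem~\ref{thm:abstract2}, to be proved over an arbitrary algebraically closed field $k$ of characteristic zero: for every smooth projective curve $C$ over $k$, the scheme $\underline{\Hom}^{nc}_k(C,X)$ has only finitely many irreducible components, and post-composition with $\pi$ carries each component, with finite fibres, into a proper closed subvariety of a single component of $\underline{\Hom}_k(C,A)$. For the finiteness of fibres I would note that a lift of $g\colon C\to A$ is a section of the finite morphism $C\times_A X\to C$, and since $C$ is reduced such a section factors through $(C\times_A X)_{\mathrm{red}}$, a reduced curve finite of degree $\deg\pi$ over the smooth curve $C$, which therefore admits at most $\deg\pi$ sections. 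Next, the Abel--Jacobi map identifies $\underline{\Hom}_k(C,A)$ with a disjoint union of torsors under $A$ indexed by $\Hom(\Jac C,A)$, and a non-constant morphism $C\to X$ maps under $\pi_*$ to a component indexed by a \emph{nonzero} $\psi$ (if $\pi\circ f$ were constant, then $f$ would factor through a finite fibre of $\pi$). So it remains to show: (a) for each $\psi$, the closure $W_\psi$ of the locus of liftable non-constant $g$ inside the corresponding copy of $A$ is a proper closed subvariety; and (b) only finitely many $\psi$ occur (then the finitely-many-components assertion follows, since over each of the finitely many relevant components of $\underline{\Hom}_k(C,A)$ the fibre of $\pi_*$ is of finite type).

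For (a) the key fact is that the branch divisor $B:=\pi(R)\subsetneq A$ is nonempty (as $\pi$ is non-\'etale) and, $A$ being simple, automatically ample. I would rule out $W_\psi=A$ by showing the liftable non-constant maps are not Zariski-dense in the component $A_\psi$: were they, one would get a positive-dimensional component $H$ of $\underline{\Hom}^{nc}_k(C,X)$ with $\pi_*(H)$ dense in $A_\psi$, hence (finiteness of fibres) $\dim H=\dim A$, and the evaluation maps $\mathrm{ev}_c\colon \widetilde H\to X$ (with $\widetilde H$ the normalization of $H$) would all be generically finite and surjective onto $X$, satisfying $\pi\circ\mathrm{ev}_c=\tau_{\psi(\iota(c))}\circ a$ for the generically finite surjection $a=\pi\circ\mathrm{ev}_{c_0}\colon \widetilde H\to A$. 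The cleanest packaging is to first prove, by a canonical bundle computation, the auxiliary statement that \emph{every morphism from an abelian variety to $X$ is constant}: a generically finite surjection $\phi$ from an abelian variety onto $X$ would give $\phi^*\omega_X\hookrightarrow\omega=\mathcal{O}$, so $\phi^*\mathcal{O}_X(R)=\phi^*\omega_X$ would be both effective and anti-effective, forcing $R=0$, contradicting non-\'etaleness; one then feeds this (together with the compatibilities above) in to contradict $W_\psi=A$. For (b) the plan is to bound $\deg_C(g^*\pi^*\mathcal{O}_A(B))$ over all non-constant liftable $g$; this quantity depends only on $\psi$ and is a positive-definite quadratic form on the finitely generated group $\Hom(\Jac C,A)$ (positivity using ampleness of $B$), so such a bound yields finitely many $\psi$. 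By the de Franchis--Severi theorem, a non-constant $g\colon C\to A$ factors through the normalization $\widetilde Y$ of its image, a curve of genus between $2$ (using $\dim A\geq2$ and simplicity) and $g(C)$ (using $\Jac C\twoheadrightarrow\Jac\widetilde Y$), and the pairs $(\widetilde Y, C\to\widetilde Y)$ range over a finite set; one is thus reduced to bounding the $B$-degree of the image curve $\widetilde Y\hookrightarrow A$ for liftable $g$. \textbf{This last bound, which must use the ramification of $\pi$ in an essential way --- morally, a curve pulled back along a morphism with rich enough ramification/monodromy over the general point of $B$ cannot carry a lift once its intersection with $B$ is large --- is the step I expect to be the main obstacle of the whole proof.}

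Granting the structural statement, the proof concludes as follows. Since $K$ is finitely generated and $L/K$ is finite, $L$ is finitely generated, so $A(L)$ is a finitely generated group (Mordell--Weil / Lang--N\'eron). After a harmless finite extension of $L$ we may assume $C(L)\neq\emptyset$, so the finitely many relevant $W_\psi\subsetneq A$ are defined over $L$; as $A$ is geometrically simple, none of them contains a translate of a positive-dimensional abelian subvariety, so Faltings's theorem on subvarieties of abelian varieties gives that each $W_\psi(L)$ is finite. Combining with the bound $\deg\pi$ on the size of the fibres of $\pi_*$, the set of non-constant morphisms $C\to X_L$ is finite, and the cutting argument of the first step upgrades this to the statement for arbitrary $V$.
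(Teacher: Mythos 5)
Your proposal has a genuine, and self-acknowledged, gap precisely at the crux. The step you flag in~(b)---bounding $\deg_C(g^*\pi^*\mathcal{O}_A(B))$ over all non-constant liftable $g\colon C\to A$, i.e.\ bounding the degree of curves in $X$ with respect to an ample class---is exactly the statement that $X$ is \emph{algebraically hyperbolic}. This is not something one can hope to rederive by elementary intersection-theoretic juggling with the branch divisor; it is a substantive theorem of Yamanoi (cited as \cite{Yamanoi1} in the paper) that a normal projective variety admitting a finite surjective non-\'etale morphism onto an abelian variety is algebraically hyperbolic. Without this input your argument does not close, and the canonical-bundle remarks in~(a) (which themselves require care for non-Gorenstein normal $X$) do not substitute for it.

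Beyond the gap, your route is also structurally different from, and more laborious than, the paper's. You reduce to the case $V=C$ a curve via the cutting argument, then analyze components of $\underline{\Hom}^{nc}_k(C,X)$ through the Abel--Jacobi decomposition of $\underline{\Hom}_k(C,A)$ and the finite-fibre map $\pi_*$, aiming to exhibit proper subvarieties $W_\psi\subsetneq A$ to which Faltings (Mordell--Lang) applies. The paper skips the reduction to curves entirely: it invokes Yamanoi to get algebraic hyperbolicity of $X_k$, then cites \cite[Theorems~1.8 and 1.9]{JKa} to conclude that for \emph{arbitrary} smooth projective $V$, the scheme $H=\underline{\Hom}^{nc}_k(V_k,X_k)$ is projective, algebraically hyperbolic, of dimension at most $\dim X-1=\dim A-1$, and that each evaluation map $\mathrm{ev}_v\colon H\to X$ is finite. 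Composing with $\pi$ gives a \emph{finite} morphism $H\to A$ with $\dim H<\dim A$, and Proposition~\ref{prop:faltings} (Faltings for proper subvarieties of a simple abelian variety, applied after pushing forward) yields Mordellicity of $H$ directly, over every finitely generated subfield of $k$. No component-by-component analysis, no induction on $\dim V$, and no separate treatment of $\dim A=1$ is required. If you want to salvage your outline, the cleanest fix is to import Yamanoi's algebraic hyperbolicity at the outset---which simultaneously gives the degree bound you are missing in~(b) and the absence of rational/elliptic curves needed elsewhere---but at that point you will be better served by the Hom-scheme packaging of \cite{JKa} and the single application of Faltings, as in the paper.
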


  Note that, in Theorem \ref{thm:ram}, we do not assume the variety $X$ to be two-dimensional. Nonetheless, the arguments used to prove Theorem \ref{thm:ram} are similar to the arguments used to prove Theorems \ref{thm:abstract2} and \ref{thm:abstract}.

We emphasize that, in our results above, we do \textbf{not} prove finiteness of "$L$-points on $X$", but rather finiteness of "$L$-curves on $X$" (and $L$-surfaces, $L$-threefolds, etc.). This is, as noted above, quite natural from the perspective of Lang's conjectures on intermediate (pseudo-)hyperbolicity; see \cite[\S 2]{EJR}. It also fits in well with predictions made by the ``Persistence Conjecture'' (see Remark \ref{remark:pers}).

  \begin{ack}
  We thank the referee for several useful comments.
  \end{ack}

\section{Proof of Theorems \ref{thm:abstract2} and \ref{thm:abstract}}

We will use Kobayashi-Ochiai's finiteness theorem \cite[Theorem~7.6.1]{KobayashiBook}.

\begin{theorem}[Kobayashi-Ochiai]\label{thm:ko} Let $k$  be an algebraically closed field of characteristic zero.
If $X$ is a projective variety  of general type over $k$ and $Y$ is a projective variety over $k$, then the set of dominant rational maps $Y\dashrightarrow X$ is finite.
\end{theorem}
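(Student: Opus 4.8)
\textbf{Proof proposal for Theorem~\ref{thm:ko} (Kobayashi--Ochiai).}

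The plan is to reduce the finiteness of dominant rational maps $Y \dashrightarrow X$ to a boundedness statement for sections of a pluricanonical bundle, using the fact that a dominant rational map pulls back pluricanonical forms injectively when $X$ is of general type. First I would recall that, since $X$ is of general type, for some sufficiently divisible $m > 0$ the $m$-canonical map $\varphi_m \colon X \dashrightarrow \PP^{N}$ associated to $\HH^0(X, \omega_X^{\otimes m})$ is birational onto its image $X_m := \varphi_m(X)$; we may and do replace $X$ by a smooth model and assume $\varphi_m$ is a morphism that is birational onto $X_m$, with $\dim X_m = \dim X$ and $\deg X_m$ bounded in terms of $m$ and the $m$-canonical degree of $X$. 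The key algebraic point is then: a dominant rational map $f \colon Y \dashrightarrow X$ induces, after resolving indeterminacy on a smooth projective model $\widetilde{Y} \to Y$, an injective pullback $f^* \colon \HH^0(X, \omega_X^{\otimes m}) \hookrightarrow \HH^0(\widetilde{Y}, \omega_{\widetilde{Y}}^{\otimes m})$ (injectivity uses that $f$ is dominant and generically smooth, so the differential $df$ is generically an isomorphism and pullback of a nonzero $m$-canonical form is nonzero). Composing, we get that the $m$-canonical map of $\widetilde{Y}$ restricted to the linear subsystem $f^*\HH^0(X,\omega_X^{\otimes m})$ recovers $\varphi_m \circ f$, hence recovers $X_m$ together with the rational map $\widetilde Y \dashrightarrow X_m$ up to the choice of that subspace.

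Next I would make this into a finiteness statement. Fix a smooth projective model $\widetilde Y$; the map $\widetilde{Y} \dashrightarrow X_m \subset \PP^N$ is (a restriction of) the rational map defined by an $(N+1)$-dimensional subspace $W_f := f^* \HH^0(X,\omega_X^{\otimes m})$ of the fixed finite-dimensional vector space $\HH^0(\widetilde Y, \omega_{\widetilde Y}^{\otimes \ell})$ for an appropriate multiple $\ell$ (one has to be slightly careful: $\omega_X^{\otimes m}$ pulls back into some fixed twist of a pluricanonical sheaf on $\widetilde Y$; since $\widetilde Y$ is fixed, all these spaces live inside a single fixed finite-dimensional space). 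The point $[W_f]$ lies in a fixed Grassmannian $\mathrm{Gr}$, which is a projective variety; and the dominant rational map $f$ is determined by the pair consisting of the subspace $W_f$ and the factorization $X \dashrightarrow X_m$, i.e.\ by $W_f$ and the birational model $X_m$. Since $X$ (hence $X_m$, hence the birational inverse $X_m \dashrightarrow X$) is fixed, $f$ is determined by $W_f$, so it suffices to show that only finitely many subspaces $W_f$ arise. For this I would argue by the rigidity / countability principle: the rational maps $\widetilde Y \dashrightarrow X_m$ of fixed bidegree are parametrized by a scheme of finite type (a locally closed subscheme of a Hom/Hilbert scheme, using that $\widetilde Y$ and $X_m$ are projective), and since each irreducible positive-dimensional family of dominant rational maps to a variety of general type would, by a standard deformation argument, produce via the induced map $\widetilde Y \times (\text{base}) \dashrightarrow X_m$ a variety dominating $X_m$ of dimension $> \dim X_m$ that is itself of general type — contradiction, because a variety of general type admits no nonconstant rational curve's worth of dominant maps onto itself (one uses that $\mathrm{Bir}(X_m)$ has no positive-dimensional connected subgroup, i.e.\ the automorphism/birational group of a variety of general type is finite, again by Kobayashi--Ochiai-type rigidity applied to $X_m$ itself). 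Hence the family of $W_f$ is a finite-type scheme of dimension zero, i.e.\ finite.

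The main obstacle I expect is the last step: proving that the family of dominant rational maps $\widetilde{Y} \dashrightarrow X_m$ (equivalently, the set of subspaces $W_f$) is \emph{discrete} and not just of bounded degree. Bounded degree gives quasi-projectivity of the parameter space but not finiteness; one genuinely needs a rigidity input. The cleanest route is to invoke the finiteness of $\mathrm{Bir}(X_m)$ for $X_m$ of general type (a classical fact, e.g.\ Matsumura's theorem that the birational automorphism group of a variety of general type is finite), combined with the observation that two dominant rational maps $f_1, f_2 \colon \widetilde Y \dashrightarrow X_m$ lying in the same connected component of the parameter space differ by a one-parameter family and hence, after choosing a general point, by a birational automorphism varying in a connected family — which must be constant. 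A careful writeup must also handle indeterminacy loci uniformly (they move in the family) and the reduction from ``rational maps'' to ``morphisms from a fixed model'', but these are standard once the rigidity statement is in hand; I would cite \cite[Theorem~7.6.1]{KobayashiBook} style arguments or give the Grassmannian-plus-$\mathrm{Bir}$ argument in full.
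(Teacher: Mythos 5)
The paper does not prove this theorem at all; it is cited as \cite[Theorem~7.6.1]{KobayashiBook} and used as a black box. So there is no ``paper's proof'' to compare against, and your proposal should be judged as a free-standing sketch. The overall architecture you describe --- bound the degree of the dominant maps by pulling back pluricanonical forms into a fixed finite-dimensional vector space, then establish a rigidity statement to turn boundedness into finiteness --- is indeed the correct skeleton, and you are rightly self-critical about where the real difficulty lies. But two of your steps are not correct as written.

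First, the injection $f^* \colon \HH^0(X,\omega_X^{\otimes m}) \hookrightarrow \HH^0(\widetilde Y, \omega_{\widetilde Y}^{\otimes m})$ only exists when $f$ is \emph{generically finite}, i.e.\ when $\dim Y = \dim X$: in that case $\omega_{\widetilde Y} = f^*\omega_X \otimes \OO_{\widetilde Y}(R)$ with $R$ effective, and the inclusion follows. When $\dim Y > \dim X$ (which the theorem allows), $f^*\omega_X$ is a line bundle of the wrong rank to compare with $\omega_{\widetilde Y}$, and the sentence ``the differential $df$ is generically an isomorphism'' is false. You need a preliminary reduction, e.g.\ replacing $Y$ by a general complete intersection of dimension $\dim X$ on which infinitely many of the $f_i$ remain dominant and pairwise distinct; this is standard but must be stated.

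Second, the rigidity argument is the genuine gap, and your proposed fix does not work. You claim that two dominant maps $f_0, f_1 \colon \widetilde Y \dashrightarrow X_m$ in the same connected component of the parameter space ``differ by a birational automorphism varying in a connected family.'' But a dominant rational map is not generically injective, so $f_1 \circ f_0^{-1}$ is a correspondence, not a rational self-map of $X_m$, and Matsumura's finiteness of $\mathrm{Bir}(X_m)$ does not directly apply. The actual deformation-rigidity input (that a connected algebraic family of dominant rational maps onto a fixed variety of general type is constant) is precisely the content one is trying to prove, and proving it requires either the complex-analytic volume-decreasing/Schwarz-lemma argument of Kobayashi--Ochiai, or an algebraic argument via Kodaira dimension of the total space $\widetilde Y \times T \dashrightarrow X$ together with a careful study of the induced family of subspaces $W_t$. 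As it stands your sketch conflates ``bounded degree, hence finite-type parameter space'' with ``zero-dimensional parameter space,'' and the bridge between them is left unbuilt.
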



We now prove our first technical result.

\begin{proof}[Proof of Theorem \ref{thm:abstract2}] Our assumption says that $X$ is pseudo-geometrically hyperbolic  \cite[Definition~11.2]{JBook}. In particular, since $\dim X =2$, by combining \cite[Proposition~5.4]{JXie} and \cite[Lemma~3.23]{JXie}, it follows that $X$ is of general type.

As in the statement of the theorem, let $H$ be a positive-dimensional irreducible (reduced) component of the scheme $\underline{\Hom}^{nc}_k(C,X)\setminus \underline{\Hom}_k(C,Z)$. Then, since every connected component of $\underline{\Hom}_k(C,X)$ is quasi-projective, the irreducible scheme  $H$ is   quasi-projective over $k$.

For $c$ in $C$, let $\mathrm{ev}_c:H\to X$ be the evaluation-at-$c$ map. By our assumption, this morphism has finite fibres over $X\setminus Z$. Now, let $c$ be in $C$ be such that $\mathrm{ev}_c(H)\not\subset Z$. (Note that such a point exists. Indeed, otherwise, the image of every $f$ in $H$ would be contained in $Z$, contradicting the definition of $H$.) In particular, for this choice of $c$, the morphism $\mathrm{ev}_c$ is generically finite onto its image, so that $\dim H\leq \dim X = 2$.  We claim that $\dim H = 1$. To prove our claim, we argue by contradiction. 

Suppose that $\dim H = 2$. Then, for every $c$ in $C$, since $\mathrm{ev}_c$ is generically finite onto its image, the morphism $\mathrm{ev}_c:H\to X$ is  dominant. Thus, since $X$ is of general type, by Kobayashi-Ochiai's finiteness theorem (Theorem \ref{thm:ko}), the set $\{\mathrm{ev}_c:H\to X \ | \ c\in C(k)\}$ is finite.  This implies that every $f$ in $H$ is constant, contradicting the fact that $H$ is contained in the moduli space of non-constant maps from $C$ to $X$. This shows that $\dim H \leq 1$. Since $H$ is positive-dimensional by assumption, we conclude that $H$ is an irreducible reduced (possibly singular and  non-proper) curve over $k$.

Again, let  $c$ in $C(k)$ be such that $\mathrm{ev}_c(H)\not\subset Z$.  Let $\overline{H}$ be the smooth projective model of $H$. Then, as $\overline{H}$ is smooth and one-dimensional, we have that $\mathrm{ev}_c:H\to X$ induces a morphism $\mathrm{ev}_c':\overline{H}\to X$. Now, since  $\mathrm{ev}_c':\overline{H}\to X$ does not map $\overline{H}$ into $Z$, it follows that $\overline{H}$ has genus at least two. Indeed, every curve of genus at most one mapping non-trivially to $X$ maps into $Z$ by our assumption that $X$ is geometrically hyperbolic modulo $Z$. This concludes the proof.
\end{proof}
 
 To prove Theorem \ref{thm:abstract}, we require several lemmas. 
 
 \begin{lemma}[Pseudo-groupless]\label{lem:groupless}
Let $X$ be a projective variety over a  field $K$ of characteristic zero, and let $Z\subset X $ be a proper closed subset. Suppose that, for every finitely generated field extension $L/K$ and every curve $C$ over $L$, the set of non-constant morphisms $f:C\to X_L$ with $f(C)\not\subset Z_L$ is finite. Then, for every abelian variety $A$ over $\overline{K}$, every non-constant rational map $A\dashrightarrow X_{\overline{K}}$ factors over $Z_{\overline{K}}$. 
 \end{lemma}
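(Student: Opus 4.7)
The plan is to argue by contradiction: from a non-constant rational map $f: A \dashrightarrow X_{\overline{K}}$ with $f(A) \not\subset Z_{\overline{K}}$, I will exhibit, over some finitely generated $L'/K$ and some smooth projective curve $C$ over $L'$, infinitely many distinct non-constant morphisms $C \to X_{L'}$ with image not in $Z_{L'}$, contradicting the hypothesis. By a spreading-out argument I may assume $A, f, X, Z$ are defined over a finitely generated subfield $L \subset \overline{K}$. I then pick a smooth projective curve $C/L$ and a morphism $h: C \to A$ so that $f \circ h: C \to X$ is a non-constant morphism with image not in $Z$; this holds for a sufficiently general complete intersection curve in $A$ (under any projective embedding), since both the indeterminacy locus of $f$ and the preimage $f^{-1}(Z)$ are proper closed subsets of $A$.

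Consider next the rational map
\[
\widetilde{\Phi}: A \dashrightarrow \underline{\Hom}_L(C, X), \qquad a \longmapsto f \circ t_a \circ h,
\]
where $t_a$ denotes translation by $a$. For $a$ in a dense open subset of $A$, $\widetilde{\Phi}(a)$ is a non-constant morphism $C \to X$ with image not in $Z$. Moreover $\widetilde{\Phi}$ is itself non-constant: otherwise $f(h(c)+a) = f(h(c))$ for all $c, a$, which on fixing $c$ would force $f$ to be constant on $h(c)+A = A$. Since $A$ is irreducible the image of $\widetilde{\Phi}$ lies in a single quasi-projective component of $\underline{\Hom}_L(C, X)$, and by Ueno's structure theorem for rational maps from abelian varieties there is a maximal connected algebraic subgroup $K \subsetneq A$ under which $\widetilde{\Phi}$ is invariant, such that $\widetilde{\Phi}$ factors as
\[
A \twoheadrightarrow A/K \xrightarrow{\phi} H,
\]
where $H$ is the Zariski closure of the image and $\phi$ is generically finite of some degree $d$. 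Since $\widetilde{\Phi}$ is non-constant, $A/K$ is a positive-dimensional abelian variety over $L$.

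Set $L' = L(A/K)$ and let $\eta \in (A/K)(L')$ be the tautological point, given by the identity morphism $A/K \to A/K$. Since $[n]: A/K \to A/K$ is a nonzero isogeny for every $n \neq 0$, the point $\eta$ is non-torsion, so $\{n\eta : n \in \mathbb{Z}\}$ is an infinite subset of $(A/K)(L')$. For each $n \neq 0$ the $L'$-point $n\eta$ projects to the generic point of $A/K$ and hence lies in the Zariski-open locus where $\phi$ is finite of degree $d$; consequently each fiber of $\phi$ contains at most $d$ of the $n\eta$, so the set $\{\phi(n\eta) : n \in \mathbb{Z}\setminus\{0\}\} \subset H(L')$ is infinite. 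Because $n\eta$ lies in the open locus on which $\widetilde{\Phi}$ takes values in $\underline{\Hom}^{nc}_L(C,X)\setminus \underline{\Hom}_L(C,Z)$, each $\phi(n\eta)$ corresponds to a non-constant $L'$-morphism $C_{L'} \to X_{L'}$ whose image is not contained in $Z_{L'}$. This contradicts the hypothesis applied to the finitely generated extension $L'/K$ and the curve $C_{L'}$. The key technical input is Ueno's theorem, which guarantees the generic finiteness of $\phi$ and thereby prevents the infinitely many points $n\eta$ from collapsing under $\widetilde{\Phi}$.
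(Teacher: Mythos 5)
Your argument has a genuine gap at the point where you invoke ``Ueno's structure theorem for rational maps from abelian varieties.'' You claim that the rational map $\widetilde{\Phi}\colon A \dashrightarrow \underline{\Hom}_L(C,X)$ factors as $A \twoheadrightarrow A/K \xrightarrow{\phi} H$ with $\phi$ generically finite onto the closure of the image, where $K$ is a maximal connected algebraic subgroup under which $\widetilde{\Phi}$ is invariant. But the Kawamata--Ueno structure theorem concerns \emph{closed subvarieties} of abelian varieties (the Ueno fibration of $W \subset A$ by translates of a maximal abelian subvariety $B$ with $W+B = W$); it says nothing about an arbitrary rational map \emph{out of} an abelian variety to some quasi-projective scheme. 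The statement you need is simply false in this generality. For instance, let $A$ be a \emph{simple} abelian surface and let $\phi\colon A \dashrightarrow \mathbb{P}^1$ be the rational map defined by a pencil inside $|L|$ for an ample line bundle $L$ with $h^0(L)\geq 2$: the fibers of $\phi$ are one-dimensional, so $\phi$ is not generically finite, yet $A$ has no nontrivial proper abelian subvariety, so the asserted factorization through a positive-dimensional quotient $A/K$ with generically finite $\phi$ cannot exist. Since your step 8 (deducing that $\{\phi(n\eta)\}$ is infinite) relies essentially on the generic finiteness of $\phi$, the contradiction you aim for is not established.

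For comparison, the paper's proof avoids any structure theorem and is considerably more elementary. After spreading out, it uses that the non-constant rational map $f\colon A \dashrightarrow X$ is defined on an open $U \subset A$ whose complement has codimension $\geq 2$ (because $A$ is smooth projective and $X$ is projective), and then, passing to a finitely generated extension $L$, chooses a smooth curve $C\subset U_L$ with $[n]C \subset U$ for every $n>0$ (possible precisely because of the codimension-$\geq 2$ property). The compositions $f_n := f\circ [n]|_C\colon C \to X_L$ then furnish the infinite family of pairwise distinct non-constant morphisms with $f_n(C)\not\subset Z_L$, contradicting the hypothesis. So the multiplication maps $[n]$, rather than translations plus a nonexistent structure theorem, are what drive the contradiction.
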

 \begin{proof}   We argue by contradiction.  Replacing $K$ by a finite field extension if necessary,   there is an abelian variety $A$ over $ K$, 
 a dense open  $U \subset A$ whose complement $A\setminus U$ is of codimension at least two in $A$, and    a non-constant morphism  $U\to X$ over $K$. Let $L/K$ be a finitely generated field extension  and let  $C\subset U_L$ be  a smooth curve  such that, for every $n>0$, we have $[n]C\subset U$. (Such a field extension $L$ and curve $C$ in $U_L$ exist since the codimension of $A\setminus U$ is at least two in $A$.) Then, the morphism $f_n: C\to X_L$ given by $c\mapsto f(nc)$ is a well-defined morphism. Note that the sequence of morphisms $f_n$ is an  infinite sequence of pairwise distinct non-constant morphisms from $C$ to $X_L$ with $f_n(C)\not\subset Z_L$. This contradicts our assumption.
 \end{proof}
 
 \begin{lemma} \label{lem:gt}   
 Let $X$ be a projective surface over a  field $K$ of characteristic zero, and let $Z\subset X $ be a proper closed subset. Suppose that, for every finitely generated field extension $L/K$ and every curve $C$ over $L$, the set of non-constant morphisms $f:C\to X_L$ with $f(C)\not\subset Z_L$ is finite. Then the following statements hold.
 \begin{enumerate}
 \item The projective variety $X$ is of general type.
 \item  If $V$ is a smooth projective variety over $K$ and $f:V\dashrightarrow X$ is a non-constant non-dominant rational map whose image does not lie in $Z$, then $f$ extends (uniquely) to a morphism $V\to X$.
 \end{enumerate} 
 \end{lemma}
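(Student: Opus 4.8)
I would use the hypothesis only through one geometric consequence, which I would establish first: \emph{if $L/K$ is finitely generated and $B$ is a smooth projective curve over $L$ with $g(B)\le 1$, then every non-constant morphism $h\colon B\to X_L$ satisfies $h(B)\subset Z_L$.} Suppose not, so $h(B)\not\subset Z_L$. If $g(B)=0$, then $\Aut_L(B)$ is an $L$-form of $\PGL_2$, so $\Aut_L(B)(L)$ is infinite, and the morphisms $h\circ\alpha$ for $\alpha\in\Aut_L(B)(L)$ are non-constant, all have image $h(B)\not\subset Z_L$, and are pairwise distinct because the stabiliser of $h$ in $\Aut_L(B)$ is finite ($h$ being non-constant); this contradicts the hypothesis applied to $B$ over $L$. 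If $g(B)=1$, then after replacing $L$ by a finite extension $B$ is an elliptic curve, and the morphisms $h\circ[n]$ for $n\ge 1$ are non-constant, have image $h(B)\not\subset Z_L$, and are pairwise distinct since their degrees $n^{2}\deg(h^{*}\OO_X(1))$ (for a fixed polarisation) are strictly increasing — again a contradiction. Hence every rational curve and every genus-one curve in $X_{\overline K}$ lies in $Z_{\overline K}$.

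\noindent\textbf{Proof of (1).} Granting the above, $X_{\overline K}$ contains neither a rational nor an elliptic curve outside $Z_{\overline K}$, and $\dim X=2$, so the Enriques--Kodaira classification should force $X$ to be of general type: every surface of Kodaira dimension $\le 1$ — a (birationally) ruled surface, an abelian, bielliptic, Enriques, or K3 surface, or a properly elliptic surface — contains a rational or elliptic curve meeting the complement of any prescribed proper closed subset (for K3 surfaces one invokes that a projective K3 surface carries infinitely many rational curves). This is exactly \cite[Proposition~5.4]{JXie} together with \cite[Lemma~3.23]{JXie}, which I would simply cite.

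\noindent\textbf{Proof of (2).} Put $W:=\overline{f(V)}\subset X$. Since $f$ is non-constant and non-dominant and $\dim X=2$, the closed subvariety $W$ is an integral curve, and $W\not\subset Z$ because $f(V)$ is dense in $W$. Let $\nu\colon\widetilde W\to W$ be the normalisation; then $\widetilde W$ is a smooth projective curve over $K$, and since $\widetilde W\to W\hookrightarrow X$ is non-constant with image $W\not\subset Z$, the first step gives $g(\widetilde W)\ge 1$. Because $V$ is smooth (hence normal) and $V\dashrightarrow W$ is dominant, $f$ lifts uniquely to a dominant rational map $\widetilde f\colon V\dashrightarrow\widetilde W$ (lift $f$ along $\nu$ over the open locus where $f$ is a morphism, using finiteness and birationality of $\nu$ and normality of $V$); it then suffices to prove that $\widetilde f$ is a morphism, for then $f=\nu\circ\widetilde f$ extends to the morphism $V\to\widetilde W\to W\hookrightarrow X$, uniquely since $V$ is reduced and $X$ is separated. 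To show $\widetilde f$ is a morphism — a property that may be checked after enlarging $K$, so assume $\widetilde W$ has a rational point — embed $\widetilde W$ by Abel--Jacobi as a closed subvariety of its Jacobian $A:=\Jac(\widetilde W)$, an abelian variety (using $g(\widetilde W)\ge 1$); then $\widetilde f$ composes to a rational map $V\dashrightarrow A$, which is a morphism by the classical fact that any rational map from a smooth variety to an abelian variety is a morphism; as this morphism $V\to A$ agrees with $\widetilde f$ on a dense open, its image lies in the closed subvariety $\widetilde W\subset A$, so $\widetilde f\colon V\to\widetilde W$ is a morphism.

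\noindent\textbf{Where the difficulty sits.} The only genuinely substantial ingredient is hidden in the citation for (1): that a surface with no rational and no elliptic curve outside a fixed proper closed subset must be of general type, whose K3 case rests on the theorem that every projective K3 surface over a field of characteristic zero contains infinitely many rational curves. Everything else is routine — in the first step the only point to watch is that all the competing morphisms remain defined over a single finitely generated field (which the automorphism and isogeny constructions ensure automatically, since they do not enlarge the field of definition), and (2) is a clean reduction to the classical extension property of rational maps into abelian varieties.
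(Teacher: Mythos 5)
Your preliminary step (the translation/multiplication argument on genus~$0$ and genus~$1$ curves) is correct, and your proof of part~(2) via lifting to the normalisation and invoking the extension property of rational maps into abelian varieties is a sound alternative to the paper's citation of \cite[\S 3]{JKa}. However, your argument for part~(1) has a genuine gap.

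You claim that a projective surface containing no rational and no elliptic curve outside a proper closed subset $Z$ must be of general type, by running through the Enriques--Kodaira classification. This is false: a \emph{simple} abelian surface contains no rational curves and no elliptic curves whatsoever, yet has Kodaira dimension $0$. Your preliminary step, which only concerns maps from curves of genus $\le 1$, is therefore vacuous for simple abelian surfaces and cannot rule them out. (Your informal list implicitly assumes the abelian case is detected by its elliptic curves, but that only works for non-simple abelian surfaces.) Moreover, the pair \cite[Proposition~5.4, Lemma~3.23]{JXie} that you cite is the one the paper uses to go from \emph{pseudo-geometric hyperbolicity} to general type, and you have not established pseudo-geometric hyperbolicity; the relevant criterion here is rather ``pseudo-groupless implies general type for surfaces'' (in the paper via \cite[Corollary~3.17, Lemma~3.23]{JXie}).

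The fix is to strengthen your preliminary step to the paper's Lemma~\ref{lem:groupless}: for \emph{every} abelian variety $A$ over $\overline K$ (of any dimension), every non-constant rational map $A\dashrightarrow X_{\overline K}$ factors through $Z_{\overline K}$. The proof is the same multiplication-by-$n$ idea you already used, but run on a curve $C$ inside (an open of) $A$ after ensuring $[n]C$ stays inside the domain of definition and $f_n(C)\not\subset Z$ for all $n$; the latter two conditions are arranged by a codimension-$\ge 2$ argument for the indeterminacy locus and a countable avoidance argument over an uncountable field, then descending $C$ to a finitely generated $L$. Once you have this, the map $A\to X$ from a two-dimensional abelian variety (in particular, the identity when $X$ itself is abelian, or the Albanese/covering maps in the other classification cases) contradicts the hypothesis unless $X$ is of general type, and the citation to \cite{JXie} applies.

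Your proof of (2) is essentially correct and a nice concrete alternative: the lift to the normalisation $\widetilde W$, the reduction to the case where $\widetilde W$ has an $L$-point, the Abel--Jacobi embedding (note your preliminary step in fact gives $g(\widetilde W)\ge 2$, not just $\ge 1$), and the classical extension theorem all go through. The paper instead cites \cite[\S 3]{JKa} directly for the fact that a rational map from a smooth variety to a projective variety without rational curves is a morphism; the content is the same.
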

 \begin{proof}
By Lemma \ref{lem:groupless}, for every abelian variety $A$ over $\overline{K}$,   every rational map $A\dashrightarrow X_{\overline{K}}$ factors over $Z_{\overline{K}}$. Therefore,  since $X$ is two-dimensional (by assumption),   it follows  that $X$ is of general type (use \cite[Corollary~3.17]{JXie} and \cite[Lemma~3.23]{JXie}).  This proves the first statement.
  
To prove the second statement, let $U\subset V$ be a dense open subset such that $f$ is regular on $U$. Let $C:= \overline{f(U)}$ be the (Zariski-)closure of the image of $U$. Note that $C$ is a  (geometrically integral) projective curve over $K$ (since $V\dashrightarrow X$ is non-dominant and non-constant). Since $f$ does not factor over $Z$, we have that $C\subset X$ does not lie in $Z$, so that the normalization of $C$ is a smooth projective curve of genus at least one. (Here we use that every non-constant morphism $\mathbb{P}^1_{\overline{K}}\to X_{\overline{K}}$ factors over $Z_{\overline{K}}$.) Now, since $C$ does not contain any rational curves, by \cite[\S 3]{JKa}, the rational map $V\dashrightarrow C$ extends to a morphism $V\to C$. This concludes the proof.
 \end{proof}

\begin{lemma}\label{lemma:cutting}
Let $X$ be a projective surface over a field $K$ of characteristic zero, and let $Z\subset X $ be a proper closed subset. 

Suppose that, for every finitely generated field extension $L/K$ and every smooth projective curve $C$ over $L$, the set of non-constant morphisms $f:C\to X_L$ with $f(C)\not\subset Z_L$ is finite.

 Then, for every finitely generated field extension $M/K$ and every variety $V$ over $M$, the set of non-constant rational maps $g:V\dashrightarrow X_M$ with $g(V)\not\subset Z_M$ is finite.
\end{lemma}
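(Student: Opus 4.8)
The plan is to prove the lemma by induction on $d:=\dim V$, using a generic hyperplane section of $V$ to drop the dimension by one at each step. First I would reduce to the case in which $V$ is smooth and projective: by resolution of singularities (in characteristic zero) there is a smooth projective variety $\widetilde V$ over $M$ together with a birational map $\widetilde V\dashrightarrow V$, and precomposing with it identifies the rational maps $V\dashrightarrow X_M$ with the rational maps $\widetilde V\dashrightarrow X_M$; since this identification preserves non-constancy and the condition $g(V)\not\subset Z_M$ (both depend only on the function field of $V$), it suffices to treat $\widetilde V$, which I rename $V$. The base case $d\le1$ is then precisely the hypothesis of the lemma, because a rational map from a smooth projective curve $V$ over $M$ to the projective variety $X_M$ is the same as a morphism $V\to X_M$ (and $d=0$ is trivial). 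It is essential here that the hypothesis is available over \emph{every} finitely generated extension of $K$, not just over number fields, since the base field grows during the induction.

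For the inductive step, let $d=\dim V\ge2$, assume $V$ is smooth and projective, and assume the conclusion of the lemma for all varieties of dimension $<d$ over all finitely generated extensions of $K$. Fix a very ample line bundle $A$ on $V$ with $\dim|A|\ge2$ (replacing $A$ by a large power if necessary), let $L:=M(|A|)$ be the function field of the projective space $|A|$ over $M$ — a finitely generated extension of $K$ — and let $V'\subset V_L$ be the generic member of $|A|$. By the Bertini theorems in characteristic zero, $V'$ is a smooth, geometrically integral, projective variety over $L$ of dimension $d-1$, so the inductive hypothesis, applied over $L$, shows that the set of non-constant rational maps $V'\dashrightarrow X_L$ with image not contained in $Z_L$ is finite. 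I would then consider the restriction map $g\mapsto(g_L)|_{V'}$ on the set $S$ of non-constant rational maps $g\colon V\dashrightarrow X_M$ with $g(V)\not\subset Z_M$, and prove that it is well defined with image in this finite set and injective; this shows that $S$ is finite, as wanted. Well-definedness, apart from non-constancy, is easy: the indeterminacy locus of $g$ has codimension $\ge2$ in the smooth variety $V$, so $V'$ is not contained in it and $(g_L)|_{V'}$ is a genuine rational map; and the generic member of a very ample system is not contained in any fixed proper closed subset of $V$, in particular not in the proper closed subset where $g$ takes values in $Z_M$, so $(g_L)|_{V'}$ has image not contained in $Z_L$. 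Injectivity follows by specialising the parameter: if $(g_L)|_{V'}=(g'_L)|_{V'}$, then $g$ and $g'$ agree on $V\cap H$ for $H$ ranging over a dense open subset of $|A|$, and since these hyperplane sections sweep out a dense subset of $V$ and $X$ is separated, $g=g'$.

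The step I expect to be the main obstacle is showing that $(g_L)|_{V'}$ is \emph{non-constant} — equivalently, that a non-constant rational map $g\colon V\dashrightarrow X_M$ is not contracted to a point when restricted to a general hyperplane section of $V$. I would argue this by contradiction. Suppose $g|_{V\cap H}$ is constant for all $H$ in a dense open subset of $|A|$. Given two general points $v_1,v_2$ of $V$ (hence in the domain of $g$), the sub-linear-system of members of $|A|$ through $v_1$ and $v_2$ has codimension two, and an incidence-variety argument — the incidence $\{(v_1,v_2,H):v_1,v_2\in H\}$ is irreducible over the open locus where $v_1\ne v_2$ and dominates $|A|$ — shows that for general $(v_1,v_2)$ this sub-system meets the dense open set on which $g|_{V\cap H}$ is constant. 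Choosing such an $H$ gives $g(v_1)=g(v_2)$, and since $(v_1,v_2)$ ranges over a dense subset of $V\times V$, this forces $g$ to be constant, a contradiction.

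Finally, an alternative to this dimension-dropping argument is to split $S$ into the dominant and the non-dominant rational maps: the dominant ones form a finite set by Kobayashi-Ochiai's theorem (Theorem~\ref{thm:ko}), since $X$ is of general type by Lemma~\ref{lem:gt}(1); and the non-dominant ones extend to morphisms $V\to X_M$ by Lemma~\ref{lem:gt}(2), which one then restricts to a generic complete-intersection curve $C\subset V$ — a smooth projective curve over a finitely generated extension of $K$ — so as to apply the hypothesis of the lemma directly. Either way, the decisive input is the same Bertini-type non-contraction statement for general complete intersections.
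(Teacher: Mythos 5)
Your main argument is correct and takes a genuinely different route from the paper's. The paper argues by contradiction: it first uses Lemma~\ref{lem:gt}(1) (that $X$ is of general type) together with Kobayashi--Ochiai's theorem to discard the finitely many dominant $f_i$, then uses Lemma~\ref{lem:gt}(2) to extend the remaining non-dominant rational maps to honest morphisms $V\to X$; at that point it picks a single \emph{very general} point $P$ over an uncountable algebraically closed extension of $K$ lying simultaneously in $\bigcap_{i\neq j}\{f_i\neq f_j\}$ and in $\bigcap_i f_i^{-1}(X\setminus Z)$, passes a smooth ample divisor $H$ through $P$, descends $(P,H)$ to a finitely generated extension $L/K$, and checks (using ampleness of $H$ and the fact that the $f_i$ are now morphisms with positive-dimensional fibres) that the restrictions $f_i|_H$ form an infinite family contradicting the induction hypothesis on $H$. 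You instead stay with rational maps throughout, slice by the \emph{generic} member $V'$ of a very ample linear system $|A|$ over $L=M(|A|)$, and show that restriction to $V'$ injects the set $S$ into the finite set of non-constant rational maps $V'\dashrightarrow X_L$ with image not in $Z_L$; non-contraction of $V'$ is handled by the two-point incidence argument rather than by the ample-divisor-meets-fibre argument. The trade-offs: the paper's route leans on the surface hypothesis on $X$ (through Lemma~\ref{lem:gt}) and on the uncountable-field trick to choose one divisor good for countably many maps at once, while your route is self-contained at the level of rational maps, needs no general-type input or Kobayashi--Ochiai in this lemma, and in fact works verbatim for a projective variety $X$ of any dimension. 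Your "alternative" at the end (split into dominant/non-dominant, extend via Lemma~\ref{lem:gt}(2), restrict to a generic complete-intersection curve) is essentially the paper's proof, modulo cutting down to a curve in one step instead of one dimension at a time. One small point worth tightening in a written-up version: for the incidence argument you need both that the incidence $I\subset (V\times V\setminus\Delta)\times|A|$ dominates $|A|$ (so that $U$ pulls back to a dense open of $I$) \emph{and} that $I$ dominates $V\times V$ (so that that dense open projects onto a dense subset of $V\times V$); you state the first explicitly, and the second is implicit in the codimension-two count, but both are used.
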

\begin{proof}  We may assume that $M=K$, and that $V$ is a smooth projective variety over $K$. To prove the lemma, we argue by induction on $\dim V$.

 If $\dim V=1$, let $\overline{V}$ be a smooth projective model for $V$. Then, every rational map $V\dashrightarrow X$ extends uniquely to a morphism $\overline{V}\to X$. Therefore,    the finiteness statement holds by assumption.
  
  Assume that $\dim V> 1$.  Suppose that $f_i:V\dashrightarrow X$ with $i\in \mathbb{N}$ are pairwise distinct non-constant rational maps over $K$.
Since $X$ is of general type (by Lemma \ref{lem:gt}.(1)), we can apply the theorem of Kobayashi-Ochiai (Theorem \ref{thm:ko}) to conclude that all but finitely many of the morphisms $f_i$ are non-dominant. Thus, discarding finitely many of the $f_i$ if necessary,  each $f_i:V\dashrightarrow X$ is a non-constant non-dominant rational map whose image does not lie in $Z$. Therefore, by Lemma \ref{lem:gt}.(2), each $f_i:V\dashrightarrow X$ extends to a morphism $f_i:V\to X$.

Let $K\subset k$ be an uncountable algebraically closed field extension.  
  For each $i$ and $j$, let $V^{i,j} =\{P \in X(k) \ | \ f_i(P) \neq f_j(P)\}$.  
 Also,  for each $i$, let $\Delta_i := f_i^{-1}(Z)$, and let $\Delta_i^0$ be its complement.
 Since $k$ is uncountable, there exists a point $P\in X(k)$ in the countable intersection of non-empty open subsets
\[
(\cap_{i\neq j} V^{i,j}) \cap (\cap_i \Delta_i^0).
\]
Let $\mathcal{H}\subset V_k$ be a smooth irreducible ample divisor containing $P$. There is a finitely generated field extension $L/K$ such that $P$ and $\mathcal{H}\subset V_k$ can be defined over $L$, i.e., the point $P$ lies in $V(L)$ and there is a smooth irreducible ample divisor $\ {H}\subset V_L$ containing $P$.  Define $g_i:=f_i|_H:H\to X$ to be the restriction of $f_i$ to $H$.  Note that $g_i(P)=f_i(P) \neq f_j(P)= g_j(P)$, so that the morphisms $g_i$ are pairwise distinct. Also, since $P\in \Delta_i^0$, we have that $g_i(H)\not\subset Z_L$.

Note that, for each $i$, the morphism $g_i:H\dashrightarrow X$ is non-constant.  To prove this,  suppose that   $g_i(H) = \{p\}$. Choose  $q\neq p$ in $g_i(V)\setminus Z$ and note that the (positive-dimensional) fibre $V_q$ does not intersect $H$. This contradicts the ampleness of $H$.
Thus, the sequence $(g_i:H\to X_L)$ is an infinite sequence of pairwise distinct non-constant morphisms with $g_i(H)\not\subset Z_L$; this contradicts the induction hypothesis and concludes the proof.
\end{proof}

 \begin{proof}[Proof of Theorem \ref{thm:abstract}]
By assumption,  there is an embedding  $K\to \mathbb{C}$ such that, for  every smooth projective curve $C$  over $\CC$,  the scheme $\underline{\Hom}_{\CC}(C,X_{\CC})\setminus \underline{\Hom}_{\CC}(C, Z_{\CC})$ is of finite type over $\CC$. 

Note that $Z_{\CC}$ contains all rational curves of $X_{\CC}$. Indeed, if there was a rational curve $\mathbb{P}^1_{\CC}\to X_{\CC}$ not factoring over $Z_{\CC}$, then the  scheme $\underline{\Hom}_{\CC}(\mathbb{P}^1_{\CC},X_{\CC})\setminus \underline{\Hom}_{\CC}(\mathbb{P}^1_{\CC}, Z_{\CC})$ would have infinitely many connected components, contradicting the fact that the latter scheme is of finite type. (The existence of infinitely many components follows from the existence of  self-maps of degree $>1$ on $\mathbb{P}^1_{\CC}$.) It follows from a standard specialization argument (Lefschetz  principle) that, for every algebraically closed field $k$  containing $k$, the subset $Z_k\subset X_k$ contains all rational curves of $X_k$.

A  standard specialization argument also shows that  our assumption persists over all algebraically closed fields of characteristic zero (Lefschetz principle). More precisely,  for every algebraically closed field $k$ containing $K$, for every smooth projective curve $C$ over $k$, the scheme $\underline{\Hom}_k(C,Xk)\setminus\underline{\Hom}_k(C,Z_k)$ is of finite type over $k$; we refer the reader to \cite[\S 9]{vBJK} for details.

By Mori's bend-and-break \cite[Proposition~3.1]{Debarrebook1}, for any algebraically closed field $k$ containing $K$, for any smooth projective curve $C$ over $k$, every $c$ in $C(k)$, and every $x$ in $X(k)\setminus Z(k)$, the set of pointed morphisms $(C,c)\to (X_{k}, x)$ is finite. Indeed, if this set were infinite, then we would have an infinite sequence of pointed maps $(C,c)\to (X_{k},x)$ of bounded degree (as $\underline{\Hom}_k(C,X_k)\setminus \underline{\Hom}_k(C,Z_k)$ is of finite type over $k$), so that there is a rational curve passing through $x$. Since $x\not\in Z_{k}$, this contradicts the fact that $Z_{k}$ contains all rational curves of $X_{k}$.

By Theorem \ref{thm:abstract2}, for every algebraically closed field $k$ containing $K$ and every smooth projective curve $C$ over $k$, we have that every positive-dimensional irreducible component of  
   \[\underline{\Hom}_{k}^{nc}(C,X_k\})\setminus \underline{\Hom}_{k}(C, Z_{k})\] is birational to  a smooth projective curve of genus at least two over $k$.

   Let $L/K$ be a finitely generated field extension,  let $k$ be an algebraically closed field containing $L$, and let $C$ be a smooth projective curve over $L$. To conclude the proof, by Lemma \ref{lemma:cutting}, it suffices to show that  the set of non-constant morphisms $f:C\to X_L$ with $f(C)\not\subset Z_L$ is finite.
   
   To do so, note that, since $\underline{\Hom}_k^{nc}(C_k,X_k)\setminus \underline{\Hom}_k(C_k,Z_k)$ is of finite type, the scheme $\mathcal{M}:= \underline{\Hom}^{nc}_L(C,X_L)\setminus \underline{\Hom}_k(C,Z_L)$   is of finite type over $L$.     In particular, since (the reduced closed subscheme of) every positive-dimensional irreducible component of  $\mathcal{M}_k$ is birational to a smooth projective curve of genus at least two, it follows from  Faltings's theorem \cite{FaltingsComplements} that the set $\mathcal{M}(L) $ of non-constant morphisms $f:C\to X_L$ with $f(C)\not\subset Z_L$ is finite, as required. 
 \end{proof}

The three applications stated in the introduction are   now obtained by combining  Theorem \ref{thm:abstract} with well-known theorems of Bogomolov and Kobayashi.

\begin{proof}[Proof of Theorem \ref{thm:brody}]
Let $K\to \CC$ be an embedding such that $X_{\CC}$ is Brody hyperbolic. Then, $X_{\CC}$ is $1$-bounded over $\CC$; see \cite[Theorem~5.3.9]{KobayashiBook}. Thus,  the statement follows from Theorem \ref{thm:abstract} (with "$Z=\emptyset$").
\end{proof}

\begin{proof}[Proof of Theorem \ref{thm:omega1}]  Let $K\to \mathbb{C}$ be an embedding.
If $\Omega^1_X$ is ample, then $\Omega^1_{X_\CC}$ is ample. In particular,  the variety $X_{\CC}$ is  Brody hyperbolic \cite[Theorem~3.6.21]{KobayashiBook}, so that  the statement follows from Theorem  \ref{thm:brody}.
\end{proof}

\begin{proof}[Proof of Theorem \ref{thm:bogomolov}] As in the statement of the theorem, let $X$ be a smooth projective surface over $K$ with $c_1^2 > c_2$.  Let $K\to \mathbb{C}$ be an embedding. Then, 
by Bogomolov's theorem \cite{Bogomolov},  there is a proper closed subset $Z\subset X_{\CC}$ such that, for every smooth projective curve $C$  over $\CC$, the scheme $\underline{\Hom}_{\CC}(C,X_\CC)\setminus \underline{\Hom}_{\CC}(C,Z)$ is of finite type. (Equivalently, the degree of a morphism $f:C\to X$ with $f(C)\not\subset  Z$ is bounded by a constant depending only on $X,Z, C$.) Therefore, the statement follows from Theorem \ref{thm:abstract}.
\end{proof}

 
 \begin{remark}
 We can   apply Theorem \ref{thm:abstract}  to any pseudo-algebraically hyperbolic projective surface. For example, if $X$ is a  projective normal surface of general type which has maximal Albanese dimension, then $X$ is pseudo-algebraically hyperbolic by \cite{Yamanoi1} (see \cite[Theorem~3.9]{JR} for details). One can also apply Theorem \ref{thm:abstract} to two-dimensional complete subvarieties of certain locally symmetric varieties.
 \end{remark}

 \begin{remark}[Persistence Conjecture]\label{remark:pers}
 Let $X$ be a   projective variety over a number field $K$ such that, for every finite extension $L/K$, the set of $L$-rational points $X(L)$ is finite. Lang asked \cite[p.~202]{Lang2} whether $X(M)$ is finite for every finitely generated extension $M/K$; this question probably has a positive answer and is formulated as the ``Persistence Conjecture'' in \cite[Conjecture~1.15]{vBJK}. Our finiteness results in this note are  also motivated by  the Persistence Conjecture. To explain this, consider a projective surface $X$ over a number field $K$ such that $X_{\CC}$ is Brody hyperbolic with respect to some embedding $K\to \mathbb{C}$.   Recall that such varieties $X$ over $K$ are conjectured to have only finitely many $L$-points for each finite extension $L/K$. In \cite[Theorem~1.7]{JAut} it is shown that, if $X(L)$ is finite for every number field $L/K$, then $X(M)$ is finite for every finitely generated field extension $M/K$ (so that Lang's conjecture has a positive answer in this case).  Although the finiteness of $X(L)$ is currently not known for number fields $L$,  our result (Theorem \ref{thm:brody}) verifies that, for $M/K$ a finitely generated extension of positive transcendence degree,  the subset   of "non-constant" elements of $X(M)$ is finite, as predicted by the Persistence Conjecture. 
\end{remark}

\section{Ramified covers} Let $k$ be  an algebraically closed field of characteristic zero.
Let $X$ be a projective variety of general type over $k$. Lang conjectured that $X$ is pseudo-Mordellic over $k$, i.e., there exists a proper closed subset $\Delta\subsetneq X$ such that, for every finitely generated subfield $K\subset k$ and every model $\mathcal{X}$ for $X$ over $K$, the set $\mathcal{X}(K)\setminus \Delta$ is finite. 

Assuming $X$ embeds into an abelian variety, this conjecture (commonly referred to as the Mordell-Lang conjecture) was proven by Faltings \cite{FaltingsLang1}. Faltings proved a more precise result. In fact, 
 if $X$ is a closed subvariety  of an abelian variety $A$ over $k$, we define  the special locus $\mathrm{Sp}(X)$ of $X$ to be  the union of the translates of positive-dimensional abelian subvarieties of $A$ contained in $X$.  By work of Kawamata and Ueno \cite{Kawamata, Ueno}, the subset  $\mathrm{Sp}(X)$ is a closed subset of $X$ and $X$ is  of general type if and only if $\mathrm{Sp}(X)\neq X$. Faltings proved that $X$ is Mordellic modulo $\mathrm{Sp}(X)$, i.e.,   for every finitely generated field $K$ contained in $k$ and every model $\mathcal{X}$ for $X$ over $K$, the set $\mathcal{X}(K)\setminus \mathrm{Sp}(X)$ is finite.
 
 Faltings's more precise result   implies that every proper closed subvariety of a  simple abelian variety is Mordellic (because the special locus of any such subvariety is obviously empty). We will use this consequence of Faltings's work in  the following form in our proof of Theorem \ref{thm:ram}.
 
 \begin{proposition}[Faltings]\label{prop:faltings}
 Let $A$ be a simple abelian variety over an algebraically closed field $k$ of characteristic zero, and let $H\to A$ be a finite   morphism of schemes. If $\dim H <\dim A$, then $H$ is Mordellic over $k$, i.e., for every finitely generated subfield $K\subset k$ and every model $\mathcal{X}$ for $X$ over $K$, the set $\mathcal{X}(K)\ $ is finite. 
 \end{proposition}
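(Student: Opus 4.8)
The plan is to deduce the proposition from Faltings's theorem on closed subvarieties of abelian varieties, in the form recalled above, by pushing forward along the finite morphism $\pi\colon H\to A$. Writing $Y:=\pi(H)$ for the image endowed with its reduced scheme structure, the key observation is that $Y$ is a \emph{proper} closed subset of $A$: a finite morphism preserves dimension, so $\dim Y=\dim H<\dim A$. Since $A$ is simple, its only positive-dimensional abelian subvariety is $A$ itself, and no translate of $A$ is contained in the proper closed subset $Y$; hence every irreducible component of $Y$ has empty special locus, so by Faltings each such component, and therefore $Y$ itself, is Mordellic over $k$. Finally, Mordellicity is inherited along the finite surjective morphism $H\to Y$, which yields the result.

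In more detail, I would proceed as follows. First, replacing $H$ by $H_{\mathrm{red}}$ -- which changes neither the hypotheses nor, for any field $L$, the set of $L$-points, and whose models over a field are the reductions of models of $H$ (in characteristic zero) -- I may assume that $H$ is reduced. Then $\pi$ factors as $H\to Y\hookrightarrow A$, where $Y=\pi(H)_{\mathrm{red}}$ is a reduced closed subscheme of $A$ (the scheme-theoretic image of a reduced scheme under a finite morphism is reduced) and $H\to Y$ is finite and surjective; moreover $\dim Y=\dim H<\dim A$. Each irreducible component $Y_i$ of $Y$ is an integral closed subscheme of $A$ with $Y_i\subsetneq A$, so its special locus $\mathrm{Sp}(Y_i)$ -- the union of the translates of positive-dimensional abelian subvarieties of $A$ contained in $Y_i$ -- is empty, as $A$ is simple. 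Faltings's theorem then gives that $Y_i$ is Mordellic modulo $\mathrm{Sp}(Y_i)=\emptyset$, i.e.\ Mordellic over $k$; since the defining property of ``Mordellic'' quantifies over all finitely generated base fields and is therefore insensitive to finite extensions, a reduced closed subscheme all of whose components are Mordellic is again Mordellic, so $Y$ is Mordellic over $k$. It remains to transfer this along $\pi$: given a finitely generated subfield $K\subset k$ and a model $\mathcal H$ of $H$ over $K$, after enlarging $K$ inside $k$ (which only enlarges the set of rational points in play) and spreading out, I may assume that $\pi$ descends to a finite morphism $\mathcal H\to\mathcal Y$ with $\mathcal Y$ a model of $Y$ over $K$. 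The induced map $\mathcal H(K)\to\mathcal Y(K)$ has finite image, because $\mathcal Y(K)$ is finite by the Mordellicity of $Y$, and it has finite fibres, because the fibre over $y\in\mathcal Y(K)$ is contained in $\mathcal H_y(k)$ with $\mathcal H_y$ a finite $K$-scheme; hence $\mathcal H(K)$ is finite, as required.

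The only step that needs genuine care is the last one: the bookkeeping with models and the spreading-out argument (the passage to a larger finitely generated base field, and, in the reduction over components of $Y$, the descent of irreducible components after a finite extension). The geometric ingredients -- that a finite morphism preserves dimension, that the reduced image of a reduced scheme under a finite morphism is a reduced closed subscheme through which the morphism factors finitely and surjectively, and that $\mathrm{Sp}(Y_i)=\emptyset$ when $A$ is simple and $Y_i\subsetneq A$ -- are standard, and the entire arithmetic content is supplied by Faltings's theorem, which has already been invoked above.
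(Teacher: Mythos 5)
Your proposal is correct and takes essentially the same route as the paper: pass to the image $Y=\pi(H)\subset A$, note that $\dim Y=\dim H<\dim A$ so $Y$ is a proper closed subset of the simple abelian variety $A$ with empty special locus, invoke Faltings's theorem to conclude $Y$ is Mordellic, and then transfer Mordellicity back to $H$ along the finite morphism $H\to Y$. The paper's proof is terser and simply asserts the last step; you spell out the reduction to $H_{\mathrm{red}}$, the component-by-component application of Faltings (important if $Y$ is reducible), and the spreading-out/descent bookkeeping needed to transfer finiteness along $\pi$, but these are elaborations of the same argument rather than a different one.
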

 \begin{proof} Let $H'$ be the image of $H\to A$. Note that $\dim H' = \dim H < \dim A$. (In particular, the morphism $H\to A$ is not surjective.) Thus, 
 by Faltings's theorem, since $\dim H'<\dim A$ and $A$ is simple, the projective variety $H'$ is Mordellic over $k$. Since $H$ is finite over $H'$ and $H'$ is Mordellic, we conclude that $H$ itself is Mordellic.
 \end{proof}

\begin{proof}[Proof of Theorem \ref{thm:ram}] Let $k=\overline{K}$ be an algebraic closure of $K$. Since $X_k$ is an integral normal projective variety and $X_k\to A_k$ is a non-\'etale finite surjective morphism, by \cite{Yamanoi1}, the projective variety $X_k$ is  algebraically hyperbolic over $k$  (as defined by  \cite{Demailly}, see also \cite{JKa}). Now, let $L/K $ and $V$ be as in the statement of the theorem, and fix an embedding $L\to k$. To prove the desired finiteness statement, we may and do assume that $V$ is a smooth projective variety over $L$. Since $X_{k}$   has no rational curves,  every rational map $V\dashrightarrow X_L$ extends (uniquely) to a morphism $V\to X_L$; see \cite[\S 3]{JKa}. By \cite[Theorem~1.9]{JKa}, the scheme $H:=\underline{\Hom}_k^{nc}(V_k,X_k)$ parametrizing non-constant morphisms from $V_k$ to $X_k$ is a projective algebraically hyperbolic scheme of dimension at most $\dim X - 1 = \dim A -1$. Also, given $v$ in $V$, the evaluation morphism $\mathrm{ev}_v: H\to X$ has finite fibres \cite[Theorem~1.8]{JKa}, so that (by projectivity), the evaluation morphism $\mathrm{ev}_v$ is a finite morphism (see, for example, \cite[Tag~01WG]{stacks-project}). In particular, the composed morphism $H\to X\to A$ is finite, so that $H$ is a Mordellic projective scheme over $k$ by Faltings's theorem (Proposition \ref{prop:faltings}). 
This implies the desired finiteness statement. 
\end{proof}

  \bibliography{refsperiod}{}
\bibliographystyle{alpha}

\end{document}